\newtheorem{thm}{Theorem}[section]
\newtheorem{cor}[thm]{Corollary}
\newtheorem{lem}[thm]{Lemma}
\newtheorem{exm}[thm]{Example}
\newtheorem{prop}[thm]{Proposition}
\theoremstyle{definition}
\newtheorem{defn}[thm]{Definition}
\theoremstyle{remark}
\newtheorem{rem}[thm]{\bf Remark}
\numberwithin{equation}{section}
\begin{document}
\title[Singular equivalences and quadratic monomial algebras]{Singular equivalences induced by bimodules and quadratic monomial algebras}
\author[Xiao-Wu Chen, Jian Liu, Ren Wang] {Xiao-Wu Chen, Jian Liu, Ren Wang$^*$}

\thanks{$^*$ the correspondence author}
\thanks{}
\subjclass[2010]{18G80, 16E45, 16D20, 16G20}
\date{\today}

\keywords{singularity category, singular equivalence, dg category, quadratic monomial algebra, bimodule}%

\maketitle

\dedicatory{}%
\commby{}%

\begin{abstract}
We investigate the problem when the tensor functor by a bimodule yields a singular equivalence. It turns out that this problem is equivalent to the one when the Hom functor given by the same bimodule induces a triangle equivalence between the homotopy categories of acyclic complexes of injective modules. We give conditions on when a bimodule appears in a pair of bimodules, that defines a singular equivalence with level. We construct an explicit bimodule, which yields a singular equivalence between a quadratic monomial algebra and its associated algebra with radical square zero.  Under certain conditions which include the Gorenstein cases, the bimodule does appear in a pair of bimodules defining  a singular equivalence with level.
\end{abstract}

\section{Introduction}
Let $k$ be a field, and  $A$ be a finite dimensional $k$-algebra. Following \cite{Buc, Or04}, the \emph{singularity category} $\mathbf{D}_{\rm sg}(A)$ of $A$ is defined to be the Verdier quotient category of the bounded derived category of $A$-modules with respect to the full subcategory of perfect complexes; see also \cite{Hap91}. The singularity category is a fundamental homological invariant for an algebra with infinite global dimension.

The \emph{dg singularity category} $\mathbf{S}_{\rm dg}(A)$ \cite{Kel18} is a canonical dg enhancement of $\mathbf{D}_{\rm sg}(A)$: it is a pretriangulated dg category whose zeroth cohomology coincides with  $\mathbf{D}_{\rm sg}(A)$.  The homotopy category $\mathbf{K}_{\rm ac}(A\mbox{-Inj})$ of acyclic complexes of injective modules is a \emph{compact-completion} of the singularity category \cite{Kra}. To be more precisely, the category $\mathbf{K}_{\rm ac}(A\mbox{-Inj})$ is compactly generated and its full subcategory of compact objects is triangle equivalent to $\mathbf{D}_{\rm sg}(A)$. As is expected, the category $\mathbf{K}_{\rm ac}(A\mbox{-Inj})$ is triangle equivalent to $\mathbf{D}(\mathbf{S}_{\rm dg}(A))$, the derived category of right dg $\mathbf{S}_{\rm dg}(A)$-modules.

Let $B$ be another finite dimensional algebra. By a \emph{singular equivalence} between $A$ and $B$, we mean a triangle equivalence between $\mathbf{D}_{\rm sg}(A)$ and $\mathbf{D}_{\rm sg}(B)$. As in \cite{ZZ, Wan15, Da}, the question when the tensor functor by an $A$-$B$-bimodule $M$ yields a singular equivalence is of interest. We assume that $M$ is projective on each side. Indeed, this situation is not so restricted, as replacing $M$ by a bounded complex will not give rise to more functors between singularity categories; see Lemma~\ref{lem:comp}. Moreover, such a tensor functor lifts automatically to a dg functor between the dg singularity categories. The Hom functor given by $M$ induces a triangle functor between the compact-completions $\mathbf{K}_{\rm ac}(A\mbox{-Inj})$ and $\mathbf{K}_{\rm ac}(B\mbox{-Inj})$.

Let us mention the useful notion of a singular equivalence with level  in \cite{Wan15}, as a singular analogue to the well-known notion of a stable equivalence of Morita type \cite{Brou}. Recently, it is proved in \cite{CLW} that Keller's conjecture for singular Hochschild cohomology is invariant under singular equivalences with levels. Therefore, we are interested in constructing singular equivalences with levels. We mention that related results in the Gorenstein cases are obtained in \cite{Da}.

 In \cite{Chen18}, the first author constructs an explicit singular equivalence between a quadratic monomial algebra $A$ and its associated algebra $B$ with radical square zero. We are motivated by the following natural question: is the singular equivalence in \cite{Chen18}  induced by some bimodule?

  We answer the above question affirmatively by constructing an explicit $A$-$B$-bimodule $M$, which induces the mentioned singular equivalence. Moreover, under certain conditions, the bimodule $M$ does appear in a pair $(M, N)$,  which defines a singular equivalence with level; see Theorem~\ref{thm:main} and Proposition~\ref{prop:main}. Combining these results with \cite{CLW}, we conclude that Keller's conjecture holds for a certain class of quadratic monomial algebras, which include the Gorenstein cases (for example, gentle algebras \cite{GR}).

The paper is structured as follows. We study general results on singularity categories in Sections~2-4, and concentrate on quadratic monomial algebras in Sections~5-8. As indicated above, the main results are Theorem~\ref{thm:main} and Proposition~\ref{prop:main}.

 In Section 2, we prove that the homotopy category of acyclic complexes  of injective modules is triangle equivalent to the derived category of right dg modules over the dg singularity category; see Theorem~\ref{thm:Kr}. In Section 3, we prove that if a bimodule induces a singular equivelence, then its Hom functor induces a triangle equivalence between the homotopy category of acyclic complexes of injective modules; see Proposition~\ref{prop:bimod}. In Section 4, we give sufficient conditions on when a bimodule appears in a pair that defines a singular equivalence with level; see Propositions~\ref{prop:sing} and \ref{prop:sing2}.

In Section~5, we recall from \cite{Chen18} a singular equivalence between a quadratic monomial algebra $A$ and its associated algebra $B$ with radical square zero. In Section~6, we construct an explicit $A$-$B$-bimodule $M$, which realizes the mentioned singular equivalence by a tensor functor;  moreover, in the Gorenstein cases, we obtain a singular equivalence with level; see Theorem~\ref{thm:main}. In Section~7, by analyzing the $A$-dual bimodule of $M$, we go beyond the Gorenstein cases in Proposition~\ref{prop:main}. In the final section, we study the $B$-dual bimodule of $M$.

We work over a fixed field $k$, that is, we require that all categories and functors are $k$-linear. By default, modules mean left modules. For any bimodule, we require that $k$ acts centrally.

\section{Singularity categories and related categories}

In this section, we recall basic facts on singularity categories and dg singularity categories. We prove that the homotopy category of acyclic complexes of injective modules is triangle equivalent to the derived category of right dg modules over the dg singularity category. This result is known to experts.  Throughout, we fix a left noetherian $k$-algebra $A$.

Denote by $A\mbox{-mod}$ the abelian category of finitely generated left $A$-modules, and by $A\mbox{-proj}$ the full subcategory formed by projective modules.  Denote by $\mathbf{D}^b(A\mbox{-mod})$ its bounded derived category. By convention, an $A$-module is viewed as a stalk complex concentrated in degree zero.

Recall that an object $X$ in $\mathbf{D}^b(A\mbox{-mod})$ is called a \emph{perfect complex}, provided that it is isomorphic to a bounded complex of finitely generated projective $A$-modules. Denote by ${\bf per}(A)\subseteq \mathbf{D}^b(A\mbox{-mod})$ the full subcategory formed by perfect complexes; it is a thick triangulated subcategory. Moreover, the quotient  functor $\mathbf{K}^b(A\mbox{-mod})\rightarrow \mathbf{D}^b(A\mbox{-mod})$ induces a triangle equivalence
$$\mathbf{K}^b(A\mbox{-proj})\stackrel{\sim}\longrightarrow {\bf per}(A).$$

Following \cite{Buc, Or04}, the \emph{singularity category} of $A$ is defined to be the following Verdier quotient
$$\mathbf{D}_{\rm sg}(A)=\mathbf{D}^b(A\mbox{-mod})/{{\bf per}(A)}.$$
The terminology is justified by the following fact: $\mathbf{D}_{\rm sg}(A)$ vanishes if and only if each finitely generated $A$-module has finite projective dimension.

Denote by $A\mbox{-\underline{mod}}$ the stable category of $A\mbox{-mod}$ modulo morphisms factoring through projective modules. There is a \emph{canonical functor}
\begin{align}\label{equ:can}
{\rm can}\colon A\mbox{-\underline{mod}}\longrightarrow \mathbf{D}_{\rm sg}(A)\end{align}
sending a module to the corresponding stalk complex concentrated in degree zero. It is well defined since projective modules are isomorphic to zero in $\mathbf{D}_{\rm sg}(A)$.

For an additive category $\mathcal{A}$, we denote by $\mathcal{A}^\natural$ its \emph{idempotent completion} \cite{BS}. The canonical embedding $\iota\colon \mathcal{A}\hookrightarrow \mathcal{A}^\natural$ is dense if and only if $\mathcal{A}$ is idempotent-complete.  Each additive functor $F\colon \mathcal{A}\rightarrow \mathcal{A}'$ induces in a straightforward manner an additive functor $F^\natural \colon \mathcal{A}^\natural\rightarrow {\mathcal{A}'}^\natural$.

If $\mathcal{A}$ is triangulated, then $\mathcal{A}^\natural$ is uniquely triangulated such that $\iota$ is a triangle functor \cite{BS}.  In general, $\mathbf{D}_{\rm sg}(A)$ is not idempotent-complete \cite{Or11}. However, if $A$ is finite dimensional over $k$, then  $\mathbf{D}_{\rm sg}(A)$ is idempotent-complete; see \cite[Corollary~2.4]{Chen11}.

Let $\mathcal{T}$ be a triangulated category with arbitrary coproducts. As usual, we denote by $\Sigma$ the translation functor of $\mathcal{T}$. An object $X$ is \emph{compact} if ${\rm Hom}_\mathcal{T}(X, -)$ commutes with arbitrary coproducts. Denote by $\mathcal{T}^c$ the full subcategory formed by compact objects; it is a thick triangulated subcategory. In particular, $\mathcal{T}^c$ is always idempotent-complete.  The triangulated category $\mathcal{T}$ is \emph{compactly generated}, provided that  there is a set $\mathcal{S}$ of compact objects such that each nonzero object $X$ satisfies ${\rm Hom}_\mathcal{T}(\Sigma^i(S), X)\neq 0$ for some $S\in \mathcal{S}$ and $i\in \mathbb{Z}$.

For a small triangulated category $\mathcal{A}$, its \emph{compact-completion} means a compactly generated triangulated category $\mathcal{T}$ with a triangle embedding $\mathcal{A}\hookrightarrow \mathcal{T}$ which induces a triangle equivalence
$$\mathcal{A}^
\natural \stackrel{\sim}\longrightarrow \mathcal{T}^c.$$ The uniqueness of  compact-completions is not known in general; compare \cite[Section~2]{Kra}.

 Let $\mathcal{C}$ be a small dg category \cite{Kel94, Dri}. Its \emph{homotopy category} $H^0(\mathcal{C})$ is defined to be a category with the same objects as $\mathcal{C}$ such that its Hom spaces are the zeroth cohomology of the corresponding Hom complexes in $\mathcal{C}$.  We denote by $\mathbf{D}(\mathcal{C})$ the derived category of \emph{right} dg $\mathcal{C}$-modules. Then we have the Yoneda embedding
$$\mathbf{Y}\colon H^0(\mathcal{C})\longrightarrow \mathbf{D}(\mathcal{C}), \quad C\mapsto \mathcal{C}(-, C).$$
Recall that $\mathbf{D}(\mathcal{C})$ is compactly generated such that the smallest thick subcategory containing in the essential image of $\mathbf{Y}$ coincides with the full subcategory $\mathbf{D}(\mathcal{C})^c$ of compact objects.

Recall that a dg category $\mathcal{C}$ is \emph{pretriangulated}, provided that the essential image of $\mathbf{Y}$ is a triangulated subcategory. In this situation, the homotopy category $H^0(\mathcal{C})$ inherits a canonical triangulated structure. Then $\mathbf{Y}$ induces a triangle equivalence
 \begin{align}\label{equ:pretri}
 H^0(\mathcal{C})^\natural \stackrel{\sim}\longrightarrow\mathbf{D}(\mathcal{C})^c.
 \end{align}
 In other words, the Yoneda embedding yields a canonical compact-completion of $H^0(\mathcal{C})$.

For a full dg subcategory $\mathcal{D}$ of $\mathcal{C}$, we denote by $\mathcal{C}/\mathcal{D}$ the corresponding dg quotient. Since we work over a field, the dg category $\mathcal{C}/\mathcal{D}$ is  simply constructed from $\mathcal{C}$ by freely adding new morphisms $\varepsilon_{D}\colon D\rightarrow D$ of degree $-1$ for each object $D$ in $\mathcal{D}$,  such that $d(\varepsilon_D)=1_D$; see \cite[Subsection~3.1]{Dri} and compare \cite[Section~4]{Kel99}. Denote by $q\colon \mathcal{C}\rightarrow \mathcal{C}/\mathcal{D}$ the quotient functor, which acts on objects by the identity.

The following results summarize basic properties of dg quotient functors.

\begin{thm}\label{thm:quo}
Keep the notation as above. Then the following statements hold.
\begin{itemize}
\item[(1)] The natural functor $\mathbf{D}(\mathcal{C}/\mathcal{D})\longrightarrow \mathbf{D}(\mathcal{C})$, sending $M$ to $Mq$, is fully faithful; moreover, a right dg $\mathcal{C}$-module $X$ lies in the essential image if and only if $X(D)$ is acyclic for each $D\in \mathcal{D}$.
    \item[(2)] Assume that both $\mathcal{C}$ and $\mathcal{D}$ are pretriangulated. Then $\mathcal{C}/\mathcal{D}$ is pretriangulated. Moreover, the quotient functor $q$ induces  a triangle equivalence
        $$H^0(\mathcal{C})/H^0(\mathcal{D})\stackrel{\sim}\longrightarrow H^0(\mathcal{C}/\mathcal{D}).$$
\end{itemize}
\end{thm}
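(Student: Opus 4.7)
The plan is to exploit Drinfeld's explicit construction of $\mathcal{C}/\mathcal{D}$, which is engineered so that the identity $1_D$ of each $D\in \mathcal{D}$ becomes a boundary via $d(\varepsilon_D)=1_D$, together with the adjunction between restriction $q^*\colon \mathbf{D}(\mathcal{C}/\mathcal{D})\to \mathbf{D}(\mathcal{C})$, $M\mapsto Mq$, and derived extension $q_!=(-)\otimes^{\mathbf{L}}_\mathcal{C}(\mathcal{C}/\mathcal{D})$. This is the standard Keller--Drinfeld package; I would prove (1) first and then obtain (2) largely as a formal corollary.

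For (1), the easy direction is that for $X\in \mathbf{D}(\mathcal{C}/\mathcal{D})$ and $D\in \mathcal{D}$, the action of $\varepsilon_D$ on $X(D)$ is a contracting homotopy for $1_{X(D)}$, so $(Xq)(D)=X(D)$ is acyclic. For full faithfulness, I would interpret $q^*$ as the inclusion of a Bousfield-localized subcategory: concretely, one checks that the counit $q_!q^*\mathcal{C}/\mathcal{D}(-,C)\to \mathcal{C}/\mathcal{D}(-,C)$ is a quasi-isomorphism on each Yoneda representable. Drinfeld's explicit formula expresses $\mathcal{C}/\mathcal{D}(-,C)$ as an ascending union (filtered by the number of $\varepsilon$-insertions) of sums of tensor products of $\mathcal{C}$-representables, so the derived tensor with $\mathcal{C}/\mathcal{D}$ may be computed filtration-term by filtration-term and the resulting associated graded is seen to give the needed quasi-isomorphism. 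Extending by a standard argument (both $q_!$ and $q^*$ preserve coproducts and triangles, and agree on a set of compact generators), $q^*$ is fully faithful. The essential image description is then a Bousfield-orthogonality statement: being $\mathcal{D}$-acyclic is equivalent to being right-orthogonal in $\mathbf{D}(\mathcal{C})$ to every $\mathcal{C}(-,D)$ with $D\in \mathcal{D}$, and this right-orthogonal coincides with the image of $q^*$ since both are the localization of $\mathbf{D}(\mathcal{C})$ away from the localizing subcategory generated by representables of $\mathcal{D}$.

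For (2), pretriangulatedness of $\mathcal{C}/\mathcal{D}$ follows formally. The left adjoint $q_!$ is a triangle functor that sends the free module $\mathcal{C}(-,C)$ to $\mathcal{C}/\mathcal{D}(-,C)$, hence it maps the essential image of $\mathbf{Y}_{\mathcal{C}}$, triangulated by hypothesis, onto the essential image of $\mathbf{Y}_{\mathcal{C}/\mathcal{D}}$; so the latter is triangulated too. For the equivalence, the relation $d(\varepsilon_D)=1_D$ forces every $D\in \mathcal{D}$ to be a zero object in $H^0(\mathcal{C}/\mathcal{D})$, so the canonical functor factors through $\bar q\colon H^0(\mathcal{C})/H^0(\mathcal{D})\to H^0(\mathcal{C}/\mathcal{D})$. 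Essential surjectivity is immediate since $q$ is the identity on objects. For full faithfulness I would transport the question across the Yoneda embeddings: by (1), $\bar q$ is identified with the canonical comparison functor from the Verdier quotient of the Yoneda image of $H^0(\mathcal{C})$ by the Yoneda image of $H^0(\mathcal{D})$ into $\mathbf{D}(\mathcal{C}/\mathcal{D})$, which is fully faithful by the standard localization principle for compactly generated triangulated categories.

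The main obstacle is the quasi-isomorphism $q_!q^*\mathcal{C}/\mathcal{D}(-,C)\to \mathcal{C}/\mathcal{D}(-,C)$ on representables in (1): one must carefully organize Drinfeld's formula as a bar-type resolution, apply the derived tensor against $\mathcal{C}/\mathcal{D}$, and verify convergence of the resulting filtration. Once this step is in place, everything else in the theorem is either a direct verification using $\varepsilon_D$ or a formal consequence of Yoneda and the general theory of Bousfield localizations.
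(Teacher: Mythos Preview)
Your proposal is correct in outline and is precisely the Keller--Drinfeld machinery that the paper invokes. Note, however, that the paper does not give a proof at all: its ``proof'' consists of two citations, pointing to \cite[Section~4]{Kel99} and \cite[Proposition~4.6]{Dri} for (1), and to \cite[Theorem~3.4]{Dri} for (2). So rather than comparing two arguments, what you have done is sketch the content behind those references.

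A couple of minor remarks on your sketch. In (1), your identification of the essential image with the right orthogonal of the representables $\mathcal{C}(-,D)$ is correct, but you should also note the converse direction explicitly: any $X$ with $X(D)$ acyclic for all $D\in\mathcal{D}$ actually lies in the essential image of $q^*$; this is where one uses that the unit ${\rm id}\to q^*q_!$ has cone in the localizing subcategory generated by the $\mathcal{C}(-,D)$. In (2), your appeal to ``the standard localization principle for compactly generated triangulated categories'' is Neeman's theorem that the Verdier quotient of compacts embeds fully faithfully, up to idempotent completion, into the compacts of the localized category; since $H^0(q)$ is already essentially surjective, the idempotent-completion caveat does not bite here, but it is worth saying so. With these small clarifications, your sketch faithfully reproduces what the cited sources prove.
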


\begin{proof}
The first result is contained in \cite[Section~4]{Kel99} and \cite[Proposition~4.6]{Dri}, and the second one is a direct consequence of \cite[Theorem~3.4]{Dri}.
\end{proof}

 For two complexes $X=(X^n, d_X^n)_{n\in \mathbb{Z}}$ and $Y=(Y^n, d_Y^n)_{n\in \mathbb{Z}}$ of $A$-modules, the Hom complex ${\rm Hom}_A(X, Y)$ is given such that
$${\rm Hom}_A(X, Y)^n=\prod_{p}{\rm Hom}_A(X^p, Y^{p+n})$$
with differential
$$d(f)=d_Y\circ f-(-1)^{|f|} f\circ d_X.$$
Here, $|f|$ denotes the degree of $f$. This defines the dg category $C_{\rm dg}^b(A\mbox{-mod})$  of bounded complexes of $A$-modules; it is pretriangulated. We observe that
$$H^0(C_{\rm dg}^b(A\mbox{-mod}))=\mathbf{K}^b(A\mbox{-mod})$$
 as triangulated categories. In other words, the dg category $C_{\rm dg}^b(A\mbox{-mod})$ is a canonical dg enhancement of the usual homotopy category $\mathbf{K}^b(A\mbox{-mod})$.

We recall from \cite[Subsection~9.8]{Kel05}  a canonical dg enhancement of  $\mathbf{D}^b(A\mbox{-mod})$.  Consider the full dg subcategory $C^{b, {\rm ac}}_{\rm dg}(A\mbox{-mod})$ of  $C_{\rm dg}^b(A\mbox{-mod})$ formed by acyclic complexes. The \emph{bounded dg derived category} of $A\mbox{-mod}$ is defined to be
the dg quotient
$$\mathbf{D}_{\rm dg}^b(A\mbox{-mod})=C^b_{\rm dg}(A\mbox{-mod})/{C^{b, {\rm ac}}_{\rm dg}(A\mbox{-mod})}.$$
In view of Theorem~\ref{thm:quo}(2), the bounded dg derived category $\mathbf{D}_{\rm dg}^b(A\mbox{-mod})$ is pretriangulated, and  there is a canonical isomorphism of  triangulated categories
$$ \mathbf{D}^b(A\mbox{-mod})\stackrel{\sim}\longrightarrow H^0(\mathbf{D}_{\rm dg}^b(A\mbox{-mod})),$$
which acts on objects by the identity. Consequently, we have a canonical compact-completion
 \begin{align*}
\mathbf{D}^b(A\mbox{-mod}) \stackrel{\sim}\longrightarrow  H^0(\mathbf{D}_{\rm dg}^b(A\mbox{-mod})) \stackrel{\mathbf{Y}}\longrightarrow \mathbf{D}(\mathbf{D}_{\rm dg}^b(A\mbox{-mod})),
\end{align*}
where $\mathbf{Y}$ denotes the Yoneda embedding. Indeed, it induces a triangle equivalence
\begin{align}\label{equ:cc-D}
\mathbf{D}^b(A\mbox{-mod}) \stackrel{\sim}\longrightarrow \mathbf{D}(\mathbf{D}_{\rm dg}^b(A\mbox{-mod}))^c.
\end{align}
Here, we use the fact that $\mathbf{D}^b(A\mbox{-mod})$ is idempotent-complete; see \cite[Corollary~2.10]{BS}.

Consider $\mathbf{per}_{\rm dg}(A)$ the full dg subcategory of  $\mathbf{D}_{\rm dg}^b(A\mbox{-mod})$ formed by perfect complexes. It is natural to define the \emph{dg singularity category} \cite{Kel18} of $A$ as the following dg quotient
$$\mathbf{S}_{\rm dg}(A)=\mathbf{D}_{\rm dg}^b(A\mbox{-mod})/{{\bf per}_{\rm dg}(A)}.$$
By the same reasoning as above, we have a canonical isomorphism of triangulated categories
$$\mathbf{D}_{\rm sg}(A)\stackrel{\sim}\longrightarrow H^0(\mathbf{S}_{\rm dg}(A)).$$
Consequently, we have a canonical compact-completion of the singularity category
$$\mathbf{D}_{\rm sg}(A)\stackrel{\sim}\longrightarrow H^0(\mathbf{S}_{\rm dg}(A)) \stackrel{\bf Y}\longrightarrow \mathbf{D}(\mathbf{S}_{\rm dg}(A)). $$
It induces a triangle equivalence
\begin{align}
\mathbf{D}_{\rm sg}(A)^\natural \stackrel{\sim}\longrightarrow \mathbf{D}(\mathbf{S}_{\rm dg}(A))^c.
\end{align}

We recall from \cite{Kra} another compact-completion of the singularity category. For this, we denote by $A\mbox{-Inj}$ the category of all injective $A$-modules. Denote by $\mathbf{K}(A\mbox{-Inj})$ the homotopy category of unbounded complexes of injective $A$-modules, and by $\mathbf{K}_{\rm ac}(A\mbox{-Inj})$ the full subcategory formed by acyclic complexes. The following triangle functor is well defined
$$\Phi\colon \mathbf{K}(A\mbox{-Inj})\longrightarrow \mathbf{D}(C^b_{\rm dg}(A\mbox{-mod})), \quad I\mapsto {\rm Hom}_A(-, I).$$

In view of \cite[Theorem~4.3]{Kel94}, the following results are expected by experts. The left equivalence is due to \cite[Proposition~A.1]{Kra} in a slightly different form, and the right one is suggested by \cite[Corollary~5.4]{Kra}.

\begin{thm}\label{thm:Kr}
The above triangle functor $\Phi$ induces triangle equivalences
$$\mathbf{K}(A\mbox{-{\rm Inj}})\stackrel{\sim}\longrightarrow  \mathbf{D}(\mathbf{D}^b_{\rm dg}(A\mbox{-{\rm mod}})) \mbox{  and   } \;  \mathbf{K}_{\rm ac}(A\mbox{-{\rm Inj}}) \stackrel{\sim}\longrightarrow  \mathbf{D}(\mathbf{S}_{\rm dg}(A)).$$
\end{thm}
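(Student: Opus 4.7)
The plan is to first establish the left-hand equivalence and then deduce the right-hand one by restricting to appropriate full subcategories on both sides. Under the left-noetherian hypothesis, $\mathbf{K}(A\mbox{-{\rm Inj}})$ is compactly generated with compact subcategory equivalent to $\mathbf{D}^b(A\mbox{-{\rm mod}})$ via an injective-resolution functor $Y\mapsto \mathbf{i}Y$, by \cite{Kra}. On the other side, (\ref{equ:cc-D}) identifies $\mathbf{D}(\mathbf{D}^b_{\rm dg}(A\mbox{-{\rm mod}}))^c$ with $\mathbf{D}^b(A\mbox{-{\rm mod}})$ via the Yoneda embedding $\mathbf{Y}$. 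The strategy is to show that $\Phi$ factors through $\mathbf{D}(\mathbf{D}^b_{\rm dg}(A\mbox{-{\rm mod}}))$, that the induced functor $\bar\Phi$ matches $\mathbf{i}$ with $\mathbf{Y}$ on compact generators, and that it preserves arbitrary coproducts; such a functor between compactly generated triangulated categories is automatically an equivalence.

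I would first verify the factorization via Theorem~\ref{thm:quo}(1): for a bounded acyclic complex $Y$ of finitely generated modules and any $I\in \mathbf{K}(A\mbox{-{\rm Inj}})$, the complex ${\rm Hom}_A(Y,I)$ is acyclic. Indeed, ${\rm Hom}_A(Y,I)$ is the totalization of the double complex with terms ${\rm Hom}_A(Y^p, I^q)$; since each $I^q$ is injective, the rows ${\rm Hom}_A(Y^\bullet, I^q)$ are acyclic (an exact contravariant functor applied to the acyclic $Y$), and boundedness of $Y$ forces the associated spectral sequence to converge. Denote the resulting triangle functor by $\bar\Phi\colon \mathbf{K}(A\mbox{-{\rm Inj}})\to \mathbf{D}(\mathbf{D}^b_{\rm dg}(A\mbox{-{\rm mod}}))$.

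The crux is the natural identification $\bar\Phi\circ \mathbf{i}\simeq \mathbf{Y}$. For $Y\in \mathbf{D}^b(A\mbox{-{\rm mod}})$ and a bounded complex $X$ of finitely generated modules, $\bar\Phi(\mathbf{i}Y)(X)={\rm Hom}_A(X,\mathbf{i}Y)$ represents $R{\rm Hom}_A(X,Y)$ because $\mathbf{i}Y$ is a bounded-below complex of injectives, while $\mathbf{Y}(Y)(X)=\mathbf{D}^b_{\rm dg}(A\mbox{-{\rm mod}})(X,Y)$ has cohomology ${\rm Ext}_A^*(X,Y)$ by the defining property of the dg quotient. I would construct a functorial zig-zag of quasi-isomorphisms between these two right dg modules, lifted from comparison maps in $C^b_{\rm dg}(A\mbox{-{\rm mod}})$ before descent to the quotient. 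Because objects of $C^b_{\rm dg}(A\mbox{-{\rm mod}})$ are bounded complexes of finitely generated modules and $A$ is noetherian (so direct sums of injectives remain injective), ${\rm Hom}_A(X,-)$ commutes with arbitrary coproducts in $\mathbf{K}(A\mbox{-{\rm Inj}})$; thus $\bar\Phi$ preserves coproducts. A standard argument then upgrades the equivalence on compacts to a triangle equivalence $\bar\Phi$.

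For the right-hand equivalence, I would apply Theorem~\ref{thm:quo}(1) to the inclusion $\mathbf{per}_{\rm dg}(A)\hookrightarrow \mathbf{D}^b_{\rm dg}(A\mbox{-{\rm mod}})$, realizing $\mathbf{D}(\mathbf{S}_{\rm dg}(A))$ as the full subcategory of $\mathbf{D}(\mathbf{D}^b_{\rm dg}(A\mbox{-{\rm mod}}))$ consisting of dg modules that vanish on perfect complexes. For any $I\in \mathbf{K}(A\mbox{-{\rm Inj}})$ and perfect $P$, the complex ${\rm Hom}_A(P,I)$ represents $R{\rm Hom}_A(P,I)$ since $P$ is K-projective, hence is acyclic whenever $I$ is; conversely, taking $P=A$ gives $\bar\Phi(I)(A)={\rm Hom}_A(A,I)=I$, so $\bar\Phi(I)$ vanishing on all perfect complexes forces $I$ itself to be acyclic. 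Thus $\bar\Phi$ restricts to the required equivalence $\mathbf{K}_{\rm ac}(A\mbox{-{\rm Inj}})\xrightarrow{\sim}\mathbf{D}(\mathbf{S}_{\rm dg}(A))$. The main obstacle I anticipate is the construction of a functorial quasi-isomorphism $\bar\Phi\circ \mathbf{i}\simeq \mathbf{Y}$ coherent with the canonical dg enhancements in (\ref{equ:cc-D}), which is the genuinely dg-categorical part of the argument rather than pure homological bookkeeping.
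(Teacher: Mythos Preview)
Your proposal is correct and follows essentially the same route as the paper: factor $\Phi$ through $\mathbf{D}(\mathbf{D}^b_{\rm dg}(A\mbox{-mod}))$ via Theorem~\ref{thm:quo}(1), check coproduct preservation, verify the restriction to compacts is an equivalence, and then restrict to the acyclic/perfect-vanishing subcategories for the second statement. The only notable difference is that the paper dispatches your ``main obstacle'' (the identification $\bar\Phi\circ\mathbf{i}\simeq\mathbf{Y}$ on compacts) by a direct appeal to \cite[Proposition~2.3(2)]{Kra} together with the canonical equivalence~(\ref{equ:cc-D}), rather than building the functorial zig-zag explicitly; your honest flagging of this step is appropriate, but no new idea is needed beyond what is already in the cited literature.
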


\begin{proof}
Denote by $\mathcal{X}$ the full subcategory of $\mathbf{D}(C^b_{\rm dg}(A\mbox{-mod}))$ formed by those dg modules $M$ such that $M(X)$ is acyclic for any bounded acyclic  complex $X$ of $A$-modules. By Theorem~\ref{thm:quo}(1), we identify $\mathbf{D}(\mathbf{D}^b_{\rm dg}(A\mbox{-{\rm mod}}))$ with $\mathcal{X}$. Recall that ${\rm Hom}_A(X, I)$ is acyclic for any  bounded-below acyclic complex $X$ and any complex $I$ of injective $A$-modules. It follows that the functor
\begin{align}\label{equ:Kr}
\Phi\colon \mathbf{K}(A\mbox{-{\rm Inj}})\longrightarrow  \mathcal{X}=\mathbf{D}(\mathbf{D}^b_{\rm dg}(A\mbox{-{\rm mod}}))
\end{align}
is well defined.

By \cite[Proposition~2.3(1)]{Kra}, the homotopy category $\mathbf{K}(A\mbox{-{\rm Inj}})$ is compactly generated. The above functor $\Phi$ respects arbitrary coproducts; moreover, by \cite[Proposition~2.3(2)]{Kra}, it restricts to an equivalence
\begin{align*}
\mathbf{K}(A\mbox{-{\rm Inj}})^c \stackrel{\sim}\longrightarrow  \mathbf{D}(\mathbf{D}^b_{\rm dg}(A\mbox{-{\rm mod}}))^c= \mathbf{D}^b(A\mbox{-{\rm mod}})
\end{align*}
between the subcategories of compact objects. Here, the rightmost equality means the canonical equivalence (\ref{equ:cc-D}).  It follows immediately that $\Phi$ is a triangle equivalence.

Denote by $\mathcal{Y}$ the full subcategory of $\mathbf{D}(\mathbf{D}^b_{\rm dg}(A\mbox{-{\rm mod}}))$ formed by those dg modules $N$ such that $N(P)$ is acyclic for any perfect complex $P$. In view of Theorem~\ref{thm:quo}(1), we identify $\mathcal{Y}$ with $\mathbf{D}(\mathbf{S}_{\rm dg}(A))$. It is well known that a complex $I$ of injective $A$-modules is acyclic if and only if $\Phi(I)(P)={\rm Hom}_A(P, I)$ is acyclic for any perfect complex $P$; compare \cite[(2.1)]{Kra}. Then the equivalence (\ref{equ:Kr}) restricts an equivalence
\begin{align}\label{equ:Kr2}
\mathbf{K}_{\rm ac}(A\mbox{-{\rm Inj}})\stackrel{\sim}\longrightarrow  \mathcal{Y}=\mathbf{D}(\mathbf{S}_{\rm dg}(A)),
\end{align}
as required.
\end{proof}

\section{Singular equivalences induced by bimodules}

In this section, we investigate the situation where a bimodule induces a tensor functor between singularity categories and a Hom functor between the homotopy categories of complexes.

Throughout, we assume that both $A$ and $B$ are left noetherian $k$-algebras. Let $M={_AM_B}$ be an $A$-$B$-bimodule, on which $k$ acts centrally. We require further that both $_AM$ and $M_B$ are finitely generated projective.

The projectivity assumption on $M$ implies that
$$M\otimes_B-\colon \mathbf{D}^b(B\mbox{-mod})\longrightarrow \mathbf{D}^b(A\mbox{-mod})$$
is well defined, which preserves perfect complexes. It induces uniquely a triangle functor
$$M\otimes_B-\colon \mathbf{D}_{\rm sg}(B)\longrightarrow \mathbf{D}_{\rm sg}(A).$$
We are interested in when this induced functor is a singular equivalence, meaning a triangle equivalence between the singularity categories.

The above two triangle functors lift to dg functors between the corresponding dg enhancements:
$$M\otimes_B-\colon \mathbf{D}^b_{\rm dg}(B\mbox{-mod})\longrightarrow \mathbf{D}^b_{\rm dg}(A\mbox{-mod}) \mbox{ and } M\otimes_B-\colon \mathbf{S}_{\rm dg}(B)\longrightarrow \mathbf{S}_{\rm dg}(A).$$
For each injective $A$-module $E$, ${\rm Hom}_A(M, E)$ is an injective $B$-module. Then we have the following well-defined triangle functors:
$${\rm Hom}_A(M, -)\colon \mathbf{K}(A\mbox{-Inj})\longrightarrow \mathbf{K}(B\mbox{-Inj}) \mbox{ and } {\rm Hom}_A(M, -)\colon \mathbf{K}_{\rm ac}(A\mbox{-Inj})\longrightarrow \mathbf{K}_{\rm ac}(B\mbox{-Inj}).$$

We apply Theorem~\ref{thm:Kr} to $A$ and $B$, and obtain the corresponding triangle equivalences $\Phi_A$ and $\Phi_B$. We claim that the following diagram is commutative.
\[\xymatrix{
\mathbf{K}(A\mbox{-Inj})\ar[d]_-{\Phi_A} \ar[rr]^-{{\rm Hom}_A(M, -)} && \mathbf{K}(B\mbox{-Inj}) \ar[d]^-{\Phi_B}\\
\mathbf{D}(\mathbf{D}^b_{\rm dg}(A\mbox{-mod})) \ar[rr]^-{(M\otimes_B-)^*}&& \mathbf{D}(\mathbf{D}^b_{\rm dg}(B\mbox{-mod}))
}\]
Here, the bottom arrow sends a right dg $\mathbf{D}^b_{\rm dg}(A\mbox{-mod})$-module $X$ to the composition $X\circ (M\otimes_B-)$, which is a right dg $\mathbf{D}^b_{\rm dg}(B\mbox{-mod})$-module. Indeed, the commutativity follows from the following standard fact: for any complex $I$ of injective $A$-modules and a bounded complex $Y$ of $B$-modules, there is a canonical isomorphism of complexes
$${\rm Hom}_B(Y, {\rm Hom}_A(M, I))\simeq {\rm Hom}_A(M\otimes_B Y, I).$$

As the equivalence (\ref{equ:Kr2}) is restricted from the equivalence (\ref{equ:Kr}), the above commutative diagram restricts to the following commutative diagram.
\begin{align}\label{comm:1}
\xymatrix{
\mathbf{K}_{\rm ac}(A\mbox{-Inj})\ar[d]_-{\Phi_A} \ar[rr]^-{{\rm Hom}_A(M, -)} && \mathbf{K}_{\rm ac}(B\mbox{-Inj}) \ar[d]^-{\Phi_B}\\
\mathbf{D}(\mathbf{S}_{\rm dg}(A)) \ar[rr]^-{(M\otimes_B-)^*}&& \mathbf{D}(\mathbf{S}_{\rm dg}(B))
}\end{align}

Recall that a dg functor $F\colon \mathcal{C}\rightarrow \mathcal{D}$ between dg categories is \emph{quasi-fully faithful}, if for any objects $C, C'\in \mathcal{C}$, the induced cochain map
$$\mathcal{C}(C, C')\longrightarrow \mathcal{D}(F(C), F(C'))$$
is a quasi-isomorphism. It follows that $H^0(F)\colon H^0(\mathcal{C})\rightarrow H^0(\mathcal{D})$ is fully faithful. A quasi-fully faithful dg functor $F$ is said to be a \emph{quasi-equivalence}, if $H^0(F)$ is dense and thus an equivalence.

The implication ``(1) $\Rightarrow$ (3)" in the following result is implicitly contained in \cite[Theorem~6.6]{Kra}.

\begin{prop}\label{prop:bimod}
Keep the assumptions as above. Consider the following statements.
\begin{itemize}
\item[(1)] The triangle functor $M\otimes_B-\colon \mathbf{D}_{\rm sg}(B)\rightarrow \mathbf{D}_{\rm sg}(A)$ is an equivalence;
    \item[(2)] The dg functor $M\otimes_B-\colon \mathbf{S}_{\rm dg}(B)\rightarrow \mathbf{S}_{\rm dg}(A)$ is a quasi-equivalence;
    \item[(3)] The triangle functor ${\rm Hom}_A(M, -)\colon \mathbf{K}_{\rm ac}(A\mbox{-{\rm Inj}})\rightarrow \mathbf{K}_{\rm ac}(B\mbox{-{\rm Inj}})$ is an equivalence;
        \item[(4)] The triangle functor $(M\otimes_B-)^\natural\colon \mathbf{D}_{\rm sg}(B)^\natural\rightarrow \mathbf{D}_{\rm sg}(A)^\natural $ is an equivalence.
\end{itemize}
Then we have implications (1)$\Leftrightarrow$ (2) $\Rightarrow$ (3) $\Leftrightarrow$ (4).
\end{prop}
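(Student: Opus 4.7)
The plan is to establish the implications in the order (1) $\Leftrightarrow$ (2), (2) $\Rightarrow$ (3), and (3) $\Leftrightarrow$ (4), leaning on Theorem~\ref{thm:Kr}, the compact-completion equivalence (\ref{equ:pretri}), and the commutative diagram (\ref{comm:1}).

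For (1) $\Leftrightarrow$ (2), the direction (2) $\Rightarrow$ (1) is obtained by taking $H^0$ of the quasi-equivalence and invoking the canonical identification $H^0(\mathbf{S}_{\rm dg}(A)) \cong \mathbf{D}_{\rm sg}(A)$. For (1) $\Rightarrow$ (2), essential surjectivity on $H^0$ is built into (1); to obtain quasi-fully-faithfulness, I would use the fact that in any pretriangulated dg category $\mathcal{C}$ one has a natural identification $H^n(\mathcal{C}(X,Y)) \cong {\rm Hom}_{H^0(\mathcal{C})}(X, \Sigma^n Y)$. Since the dg functor $F = M\otimes_B -$ is defined by a termwise tensor product with a projective bimodule, it commutes strictly with the dg-level shift, so the full-faithfulness of $H^0(F) = M\otimes_B -$ given by (1) transports to bijections at every $H^n$ of the induced map on Hom complexes.

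For (2) $\Rightarrow$ (3), a quasi-equivalence of small dg categories induces a triangle equivalence of the derived categories of right dg modules, so the bottom arrow $(M\otimes_B-)^* : \mathbf{D}(\mathbf{S}_{\rm dg}(A)) \to \mathbf{D}(\mathbf{S}_{\rm dg}(B))$ of (\ref{comm:1}) is an equivalence; as $\Phi_A$ and $\Phi_B$ are equivalences by Theorem~\ref{thm:Kr}, the top arrow ${\rm Hom}_A(M,-)$ is an equivalence, which is (3). For (3) $\Leftrightarrow$ (4), use (\ref{comm:1}) again to rephrase (3) as: ``$(M\otimes_B-)^*$ is a triangle equivalence.'' This restriction-of-scalars functor admits the left adjoint $F_! = - \otimes^{L}_{\mathbf{S}_{\rm dg}(B)} \mathbf{S}_{\rm dg}(A)$, so $F^*$ is an equivalence iff $F_!$ is. Both source and target of $F_!$ are compactly generated, and by (\ref{equ:pretri}) their compact parts are identified with $\mathbf{D}_{\rm sg}(B)^\natural$ and $\mathbf{D}_{\rm sg}(A)^\natural$ respectively; the functor $F_!$ preserves coproducts and carries the Yoneda module $\mathbf{S}_{\rm dg}(B)(-, X)$ to $\mathbf{S}_{\rm dg}(A)(-, F(X))$, so on compacts it coincides with $(M\otimes_B-)^\natural$. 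The standard criterion that a coproduct-preserving triangle functor between compactly generated triangulated categories is an equivalence iff its restriction to compact objects is then yields (3) $\Leftrightarrow$ (4).

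The main technical delicacy will be (1) $\Rightarrow$ (2), which hinges on the pretriangulated-dg-category formula $H^n(\mathcal{C}(X,Y)) \cong {\rm Hom}_{H^0(\mathcal{C})}(X, \Sigma^n Y)$ together with the strict compatibility of the dg functor $M\otimes_B -$ with the shift; the other implications are essentially formal manipulations with diagram (\ref{comm:1}) and the compact-generation/adjoint framework provided by (\ref{equ:pretri}) and Theorem~\ref{thm:Kr}.
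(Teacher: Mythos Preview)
Your proof is correct and follows essentially the same approach as the paper's: the paper packages your arguments for (1) $\Leftrightarrow$ (2) and for (3) $\Leftrightarrow$ (4) into a separate general lemma about dg functors between pretriangulated dg categories (Lemma~\ref{lem:pretri}), and then closes the chain via the trivial implication (1) $\Rightarrow$ (4) rather than your (2) $\Rightarrow$ (3), but the underlying mathematics is identical.
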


\begin{proof}
Recall the identifications $H^0(\mathbf{S}_{\rm dg}(B))=\mathbf{D}_{\rm sg}(B)$ and $H^0(\mathbf{S}_{\rm dg}(A))=\mathbf{D}_{\rm sg}(A)$. Then ``(1)$\Leftrightarrow$(2)" follows from Lemma~\ref{lem:pretri}(1) below.

By the commutative diagram (\ref{comm:1}) and the equivalences $\Phi_A$ and $\Phi_B$, we infer that (3) is equivalent to the condition that
$$(M\otimes_B-)^*\colon \mathbf{D}(\mathbf{S}_{\rm dg}(A))\longrightarrow \mathbf{D}(\mathbf{S}_{\rm dg}(B))$$
is a triangle equivalence. Then  ``(3)$\Leftrightarrow$(4)"  follows from Lemma~\ref{lem:pretri}(2). The implication ``(1)$\Rightarrow$(4)" is clear.
\end{proof}

For a dg functor $F\colon \mathcal{C}\rightarrow \mathcal{D}$, we denote by $F^*\colon \mathbf{D}(\mathcal{D})\rightarrow \mathbf{D}(\mathcal{C})$ the obvious functor sending a right dg $\mathcal{D}$-module $X$ to the right dg $\mathcal{C}$-module $X\circ F$. The following general facts are well known.

\begin{lem}\label{lem:pretri}
Let $F\colon \mathcal{C}\rightarrow \mathcal{D}$ be a dg functor between two pretriangulated dg categories. Then the following statements hold.
\begin{itemize}
\item[(1)] The functor $F$ is a quasi-equivalence if and only if $H^0(F)\colon H^0(\mathcal{C})\rightarrow H^0(\mathcal{D})$ is a triangle equivalence;
    \item[(2)] The functor $F^*\colon \mathbf{D}(\mathcal{D})\rightarrow \mathbf{D}(\mathcal{C})$ is a triangle equivalence if and only if $H^0(F)^\natural\colon  H^0(\mathcal{C})^\natural\rightarrow H^0(\mathcal{D})^\natural$ is a triangle equivalence.
\end{itemize}
\end{lem}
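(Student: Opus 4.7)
The plan is to treat the two statements separately, but both via standard dg-categorical machinery once the right dictionary between the dg and the triangulated levels is set up.

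For (1), the forward direction is essentially immediate: by definition of quasi-equivalence, each chain map $\mathcal{C}(C,C')\to \mathcal{D}(F(C),F(C'))$ is a quasi-isomorphism, so $H^0(F)$ is fully faithful on Hom sets, and it is dense by assumption; since $\mathcal{C}$ and $\mathcal{D}$ are pretriangulated, $H^0(F)$ automatically commutes with shifts and sends the canonical distinguished triangles in $H^0(\mathcal{C})$ to those in $H^0(\mathcal{D})$, hence is a triangle equivalence. For the converse, my plan is to exploit the identification $H^n(\mathcal{C}(C,C'))\cong \mathrm{Hom}_{H^0(\mathcal{C})}(C, C'[n])$, where the shift $C'[n]$ exists inside $\mathcal{C}$ thanks to pretriangulatedness, and analogously for $\mathcal{D}$. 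If $H^0(F)$ is a triangle equivalence it is fully faithful and commutes with shifts, hence induces isomorphisms on every $H^n$ of the Hom complexes, giving quasi-fully-faithfulness; combined with density of $H^0(F)$ on objects this yields quasi-equivalence of $F$.

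For (2), the central tool is the left adjoint $F_!\colon \mathbf{D}(\mathcal{C})\to \mathbf{D}(\mathcal{D})$ of $F^*$, namely the (derived) extension of scalars along $F$, characterised on Yoneda images by $F_!(\mathcal{C}(-,C))\simeq \mathcal{D}(-,F(C))$. Since coproducts of dg modules are computed pointwise, $F^*$ (being restriction along $F$) preserves arbitrary coproducts, so by the adjoint dictionary $F_!$ preserves compact objects; also $F^*$ is an equivalence if and only if $F_!$ is. I then plan to invoke the standard principle that a coproduct-preserving triangle functor between compactly generated triangulated categories is an equivalence if and only if its restriction to the subcategories of compact objects is a triangle equivalence. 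Under the canonical identifications of (\ref{equ:pretri}) for $\mathcal{C}$ and $\mathcal{D}$, the formula $F_!(\mathcal{C}(-,C))\simeq \mathcal{D}(-,F(C))$ shows that $F_!\vert_{\mathbf{D}(\mathcal{C})^c}$ agrees with $H^0(F)$ on Yoneda images and hence, by passing to thick closures, with $H^0(F)^\natural$; this closes the equivalence.

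The step I expect to be the main obstacle is the clean identification of $F_!$ on compact objects with $H^0(F)^\natural$: this requires invoking the machinery of dg modules and their extension along dg functors to pin down both the formula on representables and the preservation of thick closures. The other ingredients, namely the cohomological shift identification in (1) and the compact-generation reduction in (2), are standard and largely bookkeeping.
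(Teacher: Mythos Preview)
Your proposal is correct and follows essentially the same approach as the paper. For (1) you spell out the argument that the paper delegates to \cite[Lemma~3.1]{CC}, using the identification $H^n(\mathcal{C}(C,C'))\cong \mathrm{Hom}_{H^0(\mathcal{C})}(C,C'[n])$ afforded by pretriangulatedness; for (2) you use exactly the paper's strategy of passing to the left adjoint $F_!$ (the paper writes $F_*=-\otimes_{\mathcal{C}}^{\mathbb{L}}X_F$), reducing to compact objects via the standard devissage for compactly generated categories, and identifying the restriction with $H^0(F)^\natural$ through the Yoneda embeddings.
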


\begin{proof}
For (1), we refer to \cite[Lemma~3.1]{CC}. For (2), we recall that $F^*$ has a left adjoint $F_*$. To be more precisely, let $X_F$ be a dg $\mathcal{C}$-$\mathcal{D}$-bimodule given by $X_F(D, C)=\mathcal{D}(D, F(C))$. Then $F_*=-\otimes_\mathcal{C}^\mathbb{L}X_F$; see \cite[Example~6.1]{Kel94}. Since $F_*$ commutes with arbitrary coproducts and  preserves compact objects, it is an equivalence if and only if so is its restriction on compact objects.

 We observe the following commutative diagram.
\[\xymatrix{
H^0(\mathcal{C}) \ar[d]_-{\mathbf{Y}}\ar[rr]^-{H^0(F)} &&  H^0(\mathcal{D}) \ar[d]^-{\mathbf{Y}}\\
\mathbf{D}(\mathcal{C}) \ar[rr]^-{F_*} &&  \mathbf{D}(\mathcal{D})
}\]
Here, the vertical arrows are the Yoneda embeddings. In view of (\ref{equ:pretri}), we conclude that the restriction of $F_*$ on compact objects coincides with $H^0(F)^\natural$.

 Therefore, $F_*$ is an equivalence if and only if so is $H^0(F)^\natural$. Finally, we are done by the fact that $F^*$ is an equivalence if and only if so is the left adjoint $F_*$.
\end{proof}

\section{Singular equivalences with levels}
In this section, we give sufficient conditions on when a bimodule appears in a pair, that defines a singular equivalence with level \cite{Wan15}.  From now on, we will assume that both $A$ and $B$ are finite dimensional $k$-algebras.

As shown in the previous section,  an $A$-$B$-bimodule $M$, which is projective on each side, yields the tensor functor $M\otimes_B-$ between the singularity categories. The following lemma shows that, up to translation, replacing modules by complexes will not enlarge the class of functors.

Let $_AX_B$ be a bounded complex of finitely generated $A$-$B$-bimodules. We assume further that the underlying complexes $_AX$ and $X_B$ are both perfect. The derived tensor functor $X\otimes_B^\mathbb{L}-\colon \mathbf{D}^b(B\mbox{-mod})\rightarrow \mathbf{D}^b(A\mbox{-mod})$ is well defined and preserves perfect complexes. Therefore, we have an induced functor
$$X\otimes_B^\mathbb{L}-\colon \mathbf{D}_{\rm sg}(B)\longrightarrow \mathbf{D}_{\rm sg}(A).$$

\begin{lem}\label{lem:comp}
Let $_AX_B$ be as above. Then there exist $n\geq 0$ and  an $A$-$B$-bimodule $M$ satisfying that both $_AM$ and $M_B$ are finitely generated projective and that there is an isomorphism
$$X\otimes_B^\mathbb{L}-\simeq \Sigma^n \circ (M\otimes_B-)$$
of triangle functors between the singularity categories.
\end{lem}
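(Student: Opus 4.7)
The plan is to replace $X$ by a quasi-isomorphic bounded complex of bimodules in which every component is projective on each side and all but one is a projective bimodule; a filtration argument then contracts the projective-bimodule parts to zero in the singularity category, leaving a single bimodule shifted into a fixed degree.

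First I would take a bounded-above projective bimodule resolution $P^\bullet\to X$, where each $P^i$ is a finitely generated projective $A$-$B$-bimodule. Since both $_AX$ and $X_B$ are perfect, they have finite projective dimension, so for $N\gg 0$ the syzygy $M:=\ker(P^{-N+1}\to P^{-N+2})$ is finitely generated projective both as a left $A$-module and as a right $B$-module; here $N$ is chosen to exceed the projective dimensions of $X$ on each side, suitably adjusted by the amplitude of $X$. Replacing the tail of $P^\bullet$ with $M$ placed in cohomological degree $-N$ produces a bounded complex
$$\tilde{P}:\quad 0\To M\To P^{-N+1}\To\cdots\To P^b\To 0$$
(with $b$ the top degree of $X$) that is still quasi-isomorphic to $X$. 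Since every component of $\tilde{P}$ is projective on the right, one has $\tilde{P}\otimes_B Z\simeq X\otimes^{\mathbb L}_B Z$ for all $Z\in\mathbf{D}^b(B\mbox{-mod})$.

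Next, I would filter $\tilde{P}$ by its brutal truncations $G_j:=\sigma^{\leq j}\tilde{P}$, giving short exact sequences $0\to G_{j-1}\to G_j\to P^j[-j]\to 0$ for $-N+1\le j\le b$, which remain short exact after tensoring with $Z$ over $B$. For each such $j$, the component $P^j$ is a direct summand of $(A\otimes_k B)^{\oplus r}$, so $P^j\otimes_B Z$ is a direct summand of the bounded complex $(A\otimes_k Z)^{\oplus r}$ of finitely generated free $A$-modules; such a complex is perfect and hence vanishes in $\mathbf{D}_{\rm sg}(A)$. The induced triangle then identifies $G_{j-1}\otimes_B Z$ with $G_j\otimes_B Z$ in $\mathbf{D}_{\rm sg}(A)$, naturally in $Z$. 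Iterating for $j=-N+1,\ldots,b$ yields
$$\Sigma^N(M\otimes_B Z)\;=\;G_{-N}\otimes_B Z\;\stackrel{\sim}\To\; G_b\otimes_B Z\;=\;\tilde{P}\otimes_B Z\;\simeq\;X\otimes^{\mathbb L}_B Z$$
in $\mathbf{D}_{\rm sg}(A)$, and taking $n:=N\ge 0$ produces the required isomorphism of functors.

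The main obstacle I expect is ensuring that the syzygy $M$ is projective on \emph{both} sides simultaneously, which is exactly where both perfectness hypotheses on $X$ are essential: the same $N$ must be large enough relative to the projective dimensions of $_AX$ and $X_B$. A secondary check is that the chain of pointwise isomorphisms above assembles into an isomorphism of triangle functors, but this is automatic because each intermediate isomorphism comes from a distinguished triangle whose third term vanishes in the singularity category, and the whole construction is functorial in $Z$.
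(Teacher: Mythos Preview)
Your proof is correct and follows essentially the same route as the paper: replace $X$ by a bounded complex whose leftmost term $M$ is projective on each side and whose remaining terms are projective bimodules, then observe that the projective-bimodule part becomes perfect after tensoring and hence vanishes in $\mathbf{D}_{\rm sg}(A)$. The only cosmetic difference is that you peel off the projective bimodules one degree at a time via the filtration $G_j$, whereas the paper handles them all at once using the single brutal truncation triangle $\tau_{>-n}(Y)\to Y\to \Sigma^n(M)\to \Sigma\tau_{>-n}(Y)$.
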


\begin{proof}
By the perfectness assumption on $X$, we may replace $X$ by a bounded complex $Y$ of the following form
$$\cdots \rightarrow 0 \rightarrow M \rightarrow P^{-n+1} \rightarrow P ^{-n+2} \rightarrow  \cdots \rightarrow P^0\rightarrow \cdots$$
such that each $P^i$ is a projective $A$-$B$-bimodule, and that $M$ is an $A$-$B$-bimodule satisfying that both $_AM$ and $M_B$ are projective.  Then we have a canonical triangle in the homotopy category of bounded complexes of $A$-$B$-bimodules
$$\xi\colon \; \tau_{>-n}(Y)\longrightarrow Y \longrightarrow \Sigma^n(M) \longrightarrow \Sigma \tau_{>-n}(Y), $$
where $\tau_{>-n}(Y)$ denotes the brutal truncation of $Y$.

For each bounded complex $Z$ of $B$-modules, we observe that $\tau_{>-n}(Y)\otimes_B Z$ is perfect, that is, isomorphic to zero in $\mathbf{D}_{\rm sg}(A)$. Applying $-\otimes_B Z$ to $\xi$, we infer an isomorphism
$$Y\otimes_B Z\simeq \Sigma^n(M)\otimes Z=\Sigma^n(M\otimes_B Z)$$
in $\mathbf{D}_{\rm sg}(A)$. By the isomorphism $X\otimes^\mathbb{L}_B Z \simeq Y\otimes_B Z$, we are done.
\end{proof}

\begin{rem}
The above triangle functor $X\otimes_B^\mathbb{L}-\colon \mathbf{D}_{\rm sg}(B)\rightarrow \mathbf{D}_{\rm sg}(A)$ clearly lifts to a morphism  $\mathbf{S}_{\rm dg}(B)\rightarrow \mathbf{S}_{\rm dg}(A)$ in $\mathbf{Hodgcat}$, the homotopy category of small dg categories \cite{Tab}. In view of the derived Morita theory \cite{Toe, CC},  the following two questions seem to be fundamental: does  any triangle functor $\mathbf{D}_{\rm sg}(B)\rightarrow \mathbf{D}_{\rm sg}(A)$ lift to $\mathbf{Hodgcat}$? How to characterize the morphism set in $\mathbf{Hodgcat}$ between dg singularity categories?
\end{rem}

\begin{lem}\label{lem:fd}
Let $M$ be an $A$-$B$-bimodule which is finitely generated projective on each side. Then $M\otimes_B- \colon \mathbf{D}_{\rm sg}(B)\rightarrow \mathbf{D}_{\rm sg}(A)$ is a triangle equivalence if and only if so is ${\rm Hom}_A(M, -)\colon \mathbf{K}_{\rm ac}(A\mbox{-{\rm Inj}})\rightarrow \mathbf{K}_{\rm ac}(B\mbox{-{\rm Inj}})$.
\end{lem}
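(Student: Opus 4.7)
The plan is to bootstrap Proposition~\ref{prop:bimod} using the finite-dimensional hypothesis on $A$ and $B$. Recall from Section~2 (citing \cite[Corollary~2.4]{Chen11}) that when $A$ is finite dimensional, the singularity category $\mathbf{D}_{\rm sg}(A)$ is already idempotent-complete, and likewise for $B$. Thus the canonical embeddings
\[
\mathbf{D}_{\rm sg}(A)\hookrightarrow \mathbf{D}_{\rm sg}(A)^{\natural} \quad \text{and} \quad \mathbf{D}_{\rm sg}(B)\hookrightarrow \mathbf{D}_{\rm sg}(B)^{\natural}
\]
are triangle equivalences. Under these identifications, the functor $(M\otimes_B-)^{\natural}$ on idempotent completions is naturally isomorphic to $M\otimes_B-$ itself.

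The forward direction is the implication ``(1)$\Rightarrow$(3)" of Proposition~\ref{prop:bimod}, so there is nothing more to do there. For the converse, I would argue as follows. Assume that ${\rm Hom}_A(M, -)\colon \mathbf{K}_{\rm ac}(A\mbox{-{\rm Inj}})\rightarrow \mathbf{K}_{\rm ac}(B\mbox{-{\rm Inj}})$ is a triangle equivalence, i.e.\ condition (3) of Proposition~\ref{prop:bimod} holds. By the equivalence ``(3)$\Leftrightarrow$(4)" proved there, the functor
\[
(M\otimes_B-)^{\natural}\colon \mathbf{D}_{\rm sg}(B)^{\natural}\longrightarrow \mathbf{D}_{\rm sg}(A)^{\natural}
\]
is a triangle equivalence. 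By the observation in the previous paragraph, this coincides (up to the canonical identification) with $M\otimes_B-\colon \mathbf{D}_{\rm sg}(B)\rightarrow \mathbf{D}_{\rm sg}(A)$, which is therefore also a triangle equivalence, giving (1).

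There is no real obstacle here: the lemma is essentially a corollary of Proposition~\ref{prop:bimod} combined with the idempotent-completeness of $\mathbf{D}_{\rm sg}(A)$ and $\mathbf{D}_{\rm sg}(B)$ under the finite-dimensional assumption. The only point that requires a brief verification is the naturality statement that the functor induced on idempotent completions agrees, via the canonical equivalence, with the original functor --- this is immediate from the universal property of the idempotent completion and the fact that $M\otimes_B-$ is already defined on $\mathbf{D}_{\rm sg}(B)$ itself.
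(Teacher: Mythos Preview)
Your proof is correct and follows essentially the same route as the paper: you invoke the idempotent-completeness of $\mathbf{D}_{\rm sg}(A)$ and $\mathbf{D}_{\rm sg}(B)$ in the finite-dimensional case (via \cite[Corollary~2.4]{Chen11}) to collapse conditions (1) and (4) of Proposition~\ref{prop:bimod}, and then the chain of implications there gives the full equivalence. The paper's proof is just a terser version of exactly this argument.
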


\begin{proof}
Recall that the singularity category of a finite dimensional algebra is always idempotent-complete; see \cite[Corollary~2.4]{Chen11}. Then in Proposition~\ref{prop:bimod} applied to this situation, the conditions (1) and (4) are equivalent. Then we are done.
\end{proof}

Let us recall a nice situation, where a pair of bimodules induces a singular equivalence. We denote by $A^e=A\otimes_k A^{\rm op}$ the enveloping algebra of $A$. We identify $A$-$A$-bimodules with  left $A^e$-modules. Denote by $\Omega_{A^e}(-)$ the syzygy functor on the stable category $A^e\mbox{-\underline{mod}}$ of $A$-$A$-bimodules. The following terminology is modified from \cite[Definition~2.1]{Wan15}.

\begin{defn}\label{defn:singequi}
Let $_AM_B$ and $_BN_A$ be an $A$-$B$-bimodule and a $B$-$A$-bimodule, respectively,  and let $n\geq 0$.  The pair $(M, N)$ is said to  define a \emph{singular equivalence with level $n$},  provided that the following conditions are satisfied:
\begin{enumerate}
\item[(1)] The four one-sided modules $_AM$, $M_B$, $_BN$ and $N_A$ are all finitely generated projective.
\item[(2)]  There are isomorphisms $M\otimes_B N \simeq \Omega^n_{A^e}(A)$ and $N\otimes_A M\simeq \Omega^n_{B^e}(B)$ in $A^e\mbox{-\underline{mod}}$ and $B^e\mbox{-\underline{mod}}$, respectively. \hfill $\square$
 \end{enumerate}
 \end{defn}

Let us make simple observations. We denote by $\Omega_{A\mbox{-}B}(-)$ the syzygy functor on the stable category of $A$-$B$-bimodules.

\begin{rem}\label{rem:syzygy}
Let $(M, N)$ define a singular equivalence with level $n$. Then the following statements hold.
\begin{itemize}
\item[(1)] Both $(\Omega_{A\mbox{-}B}(M), N)$ and $(M, \Omega_{B\mbox{-}A}(N))$ define singular equivalences with level $n+1$. Here, we use isomorphisms $\Omega_{A^e}(M\otimes_B N)\simeq \Omega_{A\mbox{-}B}(M)\otimes_B N\simeq M\otimes_B\Omega_{B\mbox{-}A}(N)$ of $A$-$A$-bimodules.
\item[(2)] Assume that $(M', N)$ defines a singular equivalence with level $n$. Then $M$ and $M'$ are related such that $\Omega_{A\mbox{-}B}^n(M)$ and $\Omega_{A\mbox{-}B}^n(M')$ are isomorphic in the stable category of bimodules. Here, we just compute $M\otimes_B N\otimes_A M'$ in two different ways.
    \end{itemize}
\end{rem}

We summarize related concepts in the following remark. We denote by ${\rm rad}(A)$ the Jacobson radical of $A$.

\begin{rem}\label{rem:singequi}
\begin{enumerate}
\item[(1)] A  stable equivalence of Morita type in the sense of \cite[Definition~5.A]{Brou} becomes naturally a singular equivalence with level zero. By \cite[Theorem~2.3]{Wan15}, a derived equivalence  induces a singular equivalence with a certain level. Moreover, by \cite[Proposition~2.6]{Sk}, a singular equivalence of Morita type \cite{ZZ} induces a singular equivalence with a certain level.
        \item[(2)] For an algebra $A$ with $A/{{\rm rad}(A)}$ separable over $k$, Keller's conjecture \cite{Kel18} states that the singular Hochschild cochain complex of $A$ is isomorphic to the Hochschild cochain complex of $\mathbf{S}_{\rm dg}(A)$ on the $B_\infty$-level. By \cite[Theorem~9.4(3)]{CLW}, Keller's conjecture is invariant under singular equivalences with levels.
\end{enumerate}

\end{rem}

The following observations justify the terminology. We mention that the first half is due to \cite[Remark~2.2]{Wan15} and \cite[Proposition~4.2]{Da}.

\begin{lem}\label{lem:Wang}
Assume that $(M, N)$ defines a singular equivalence with level $n$. Then the following statements hold.
\begin{enumerate}
\item The triangle functor
$$M\otimes_B- \colon \mathbf{D}_{\rm sg}(B)\longrightarrow \mathbf{D}_{\rm sg}(A)$$
is an equivalence, whose quasi-inverse is given by $\Sigma^n\circ (N\otimes_A-)$.
\item The triangle functor
$${\rm Hom}_A(M, -)\colon \mathbf{K}_{\rm ac}(A\mbox{-{\rm Inj}})\longrightarrow \mathbf{K}_{\rm ac}(B\mbox{-{\rm Inj}})$$
is an equivalence, whose quasi-inverse is given by $\Sigma^{-n}\circ {\rm Hom}_B(N, -)$.
\end{enumerate}
\end{lem}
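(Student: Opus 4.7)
The plan is to check directly the compositions against the candidate quasi-inverses. Two preliminary observations are used throughout. First, if $P$ is a finitely generated projective $A$-$A$-bimodule and $X$ is any left $A$-module, then $P\otimes_A X$ is a projective left $A$-module; this reduces to the calculation $(A\otimes_k A^{\rm op})\otimes_A X\cong A\otimes_k X$, which is free over $A$. Second, and dually, for any acyclic complex $I$ of injective $A$-modules, the complex ${\rm Hom}_A(P,I)$ is $A$-linearly contractible, hence zero in $\mathbf{K}_{\rm ac}(A\mbox{-{\rm Inj}})$: one identifies ${\rm Hom}_A(A\otimes_k A^{\rm op},I)\cong A^*\otimes_k I$ as complexes of left $A$-modules, and any $k$-linear contraction of $I$ (which exists because $k$ is a field) tensors with $A^*$ to give an $A$-linear contraction.

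For part~(1), observe $(M\otimes_B-)\circ \Sigma^n(N\otimes_A-)\cong \Sigma^n\bigl((M\otimes_B N)\otimes_A-\bigr)$. Choose a finitely generated projective bimodule resolution $P_\bullet\to A$ over $A^e$ whose $n$-th syzygy is $\Omega^n_{A^e}(A)$. Since each $P_i$ is right-$A$-projective, tensoring with any left $A$-module $X$ yields an exact sequence
$$0\to\Omega^n_{A^e}(A)\otimes_A X\to P_{n-1}\otimes_A X\to\cdots\to P_0\otimes_A X\to X\to 0,$$
which by the first preliminary is a projective resolution of $X$; hence $\Omega^n_{A^e}(A)\otimes_A X\cong \Omega^n_A(X)$ in $A\mbox{-\underline{mod}}$, naturally in $X$. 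Combined with $M\otimes_B N\simeq \Omega^n_{A^e}(A)$ (projective bimodules staying projective after $-\otimes_A X$), this gives $(M\otimes_B N)\otimes_A X\simeq \Sigma^{-n}X$ in $\mathbf{D}_{\rm sg}(A)$. Since $\mathbf{D}_{\rm sg}(A)$ is generated as a triangulated category by the image of the canonical functor~\eqref{equ:can}, the natural isomorphism of triangle functors on modules extends to all of $\mathbf{D}_{\rm sg}(A)$; the symmetric calculation using the resolution of $B$ yields $\Sigma^n(N\otimes_A-)\circ(M\otimes_B-)\simeq {\rm id}$ on $\mathbf{D}_{\rm sg}(B)$.

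For part~(2), tensor-Hom adjunction rewrites the composition as $\Sigma^{-n}\circ{\rm Hom}_B(N,-)\circ{\rm Hom}_A(M,-)\cong \Sigma^{-n}\circ{\rm Hom}_A(M\otimes_B N,-)$. Applying ${\rm Hom}_A(-,I)$ to each short exact sequence $0\to\Omega^{k+1}_{A^e}(A)\to P_k\to \Omega^k_{A^e}(A)\to 0$ of the resolution gives, by injectivity of each $I^j$, a degreewise-split short exact sequence of complexes whose middle term ${\rm Hom}_A(P_k,I)$ vanishes in $\mathbf{K}_{\rm ac}(A\mbox{-{\rm Inj}})$ by the second preliminary. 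The resulting exact triangles yield ${\rm Hom}_A(\Omega^{k+1}_{A^e}(A),I)\simeq \Sigma{\rm Hom}_A(\Omega^k_{A^e}(A),I)$, and iterating produces ${\rm Hom}_A(\Omega^n_{A^e}(A),I)\simeq \Sigma^n I$. Similarly, writing $M\otimes_B N\oplus P\cong \Omega^n_{A^e}(A)\oplus Q$ with $P,Q$ projective bimodules and using that ${\rm Hom}_A(P,I)$ and ${\rm Hom}_A(Q,I)$ vanish in $\mathbf{K}_{\rm ac}(A\mbox{-{\rm Inj}})$, we transfer the stable isomorphism to ${\rm Hom}_A(M\otimes_B N,I)\simeq {\rm Hom}_A(\Omega^n_{A^e}(A),I)\simeq \Sigma^n I$, naturally in $I$; composing with the leading $\Sigma^{-n}$ gives ${\rm id}$. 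The opposite composition over $B$ is identical.

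The main subtlety is the contractibility of ${\rm Hom}_A(P,I)$ for $P$ a projective bimodule and $I$ merely acyclic rather than contractible; this is what allows stable-category isomorphisms in $A^e\mbox{-\underline{mod}}$ to survive unchanged after applying ${\rm Hom}_A(-,I)$, and it relies essentially on the field hypothesis. Everything else is straightforward bookkeeping, including the extension of natural isomorphisms of triangle functors from modules to the full singularity category via the generation statement.
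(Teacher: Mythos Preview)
Your proof is correct and follows essentially the same route as the paper's: both arguments hinge on the observation that projective $A^e$-bimodules become invisible after $-\otimes_A X$ (yielding projectives) and after ${\rm Hom}_A(-,I)$ (yielding contractible complexes, via the field hypothesis), so that the stable isomorphisms $M\otimes_B N\simeq\Omega^n_{A^e}(A)$ and $N\otimes_A M\simeq\Omega^n_{B^e}(B)$ transfer directly to the desired functor isomorphisms. The only cosmetic differences are that the paper works with bounded complexes from the outset (invoking the brutal-truncation argument of Lemma~\ref{lem:comp}) rather than checking on modules and extending by generation, and in part~(2) it appeals to Lemma~\ref{lem:fd} to reduce to checking a single composition, whereas you verify both compositions directly.
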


\begin{proof}
(1) By the same argument in the proof of Lemma~\ref{lem:comp}, we infer that there is an isomorphism in $\mathbf{D}_{\rm sg}(A)$
$$\Sigma^n \Omega_{A^e}^n(A)\otimes_A Z\simeq A\otimes_AZ=Z$$
for any bounded complex $Z$ of $A$-modules. Then we have isomorphisms
$$M\otimes_B(N\otimes_A Z)\simeq \Omega_{A^e}(A)\otimes_A Z \simeq \Sigma^{-n}(Z).$$
For the same reason, we have isomorphisms in $\mathbf{D}_{\rm sg}(B)$
 $$N\otimes_A(M\otimes_B Y)\simeq  \Sigma^{-n}(Y)$$
 for any bounded complex $Y$ of $B$-modules. Then the required result follows immediately.

 (2) Let $I$ be an arbitrary acyclic complex of injective $A$-modules. We claim that for any projective $A$-$B$-bimodule $P$, the Hom complex ${\rm Hom}_A(P, I)$ of $B$-modules is contractible. Indeed, it suffices to prove the claim for $P=A\otimes_k B$. The following isomorphism of complexes
 $${\rm Hom}_A(A\otimes_k B, I)\simeq {\rm Hom}_k(B, I)$$
 implies the required contractibility, since $I$ is contractible as a complex of $k$-modules.

Let $L$ be an $A$-$A$-bimodule which is projective on each side. Take a short exact sequence of $A$-$A$-modules
$$0\longrightarrow \Omega_{A^e}(L) \longrightarrow P \longrightarrow L\longrightarrow 0$$
with $P$ projective. We have an induced short exact sequence of complexes of injective $A$-modules.
$$0\longrightarrow {\rm Hom}_A(L, I)\longrightarrow {\rm Hom}_A(P, I)\longrightarrow {\rm Hom}_A(\Omega_{A^e}(L), I)\longrightarrow 0$$
This induced sequence corresponds to an exact triangle in $\mathbf{K}_{\rm ac}(A\mbox{-Inj})$. The above claim implies that ${\rm Hom}_A(\Omega_{A^e}(L), I)\simeq \Sigma {\rm Hom}_A(L, I)$. Inductively, we infer a natural isomorphism
 \begin{align}
 {\rm Hom}_A(\Omega^n_{A^e}(L), I)\simeq \Sigma^n {\rm Hom}_A(L, I)
 \end{align}
for each $n\geq 0$.

By Lemma~\ref{lem:fd}, both  ${\rm Hom}_A(M, -)$ and ${\rm Hom}_B(N, -)$ are equivalences. We now have natural isomorphisms of complexes
\begin{align*}
&{\rm Hom}_B(N, {\rm Hom}_A(M, I)) \simeq {\rm Hom}_A(M\otimes_B N, I)\\
&\simeq {\rm Hom}_A(\Omega_{A^e}^n(A), I)\simeq \Sigma^n {\rm Hom}_A(A, I)= \Sigma^n(I).
\end{align*}
This clearly implies the required statement on the quasi-inverse. We mention that one might give an alternative proof using (\ref{comm:1}), applied both to $M$ and $N$.
\end{proof}

In what follows, we fix an $A$-$B$-bimodule $M$, which is finitely generated projective on each side.

We will consider its $A$-dual ${\rm Hom}_A(M, A)$ and $B$-dual ${\rm Hom}_{B^{\rm op}}(M, B)$, both of which are $B$-$A$-bimodules. Here, ${\rm Hom}_{B^{\rm op}}(-, -)$ means the Hom bifunctor between right $B$-modules.

In view of Lemma~\ref{lem:comp}, the following result is  a variant of \cite[Theorem~3.6]{Da} in a slightly different setting. It might be viewed as a partial converse of Lemma~\ref{lem:Wang}.

\begin{prop}\label{prop:sing}
Suppose that both $A/{\rm rad}(A)$ and $B/{\rm rad}(B)$ are separable over $k$. Assume that ${\rm Hom}_A(M, A)$ has finite projective dimension as a left $B$-module, and that $M\otimes_B- \colon \mathbf{D}_{\rm sg}(B)\rightarrow \mathbf{D}_{\rm sg}(A)$ is an equivalence. Then there is an $B$-$A$-bimodule $N$ such that $(M, N)$ defines a singular equivalence with level.
\end{prop}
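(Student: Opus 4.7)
The plan is to construct $N$ as a sufficiently high bimodule syzygy of the $A$-dual $M^{\ast}:=\mathrm{Hom}_A(M,A)$ and to verify the two conditions of Definition~\ref{defn:singequi}(2). Since $_AM$ is finitely generated projective, $M^{\ast}$ is a $B$-$A$-bimodule with $M^{\ast}_A$ finitely generated projective, and by hypothesis $_BM^{\ast}$ has finite projective dimension, say $d$. Fix a projective resolution $P_\bullet\to M^{\ast}$ in the category of $B$-$A$-bimodules, and set $N:=\Omega^n_{B\otimes_k A^{\mathrm{op}}}(M^{\ast})$ for some $n\geq d$ to be chosen later. Because a projective $B$-$A$-bimodule is projective both as left $B$-module and as right $A$-module, the condition $n\geq d$ forces $_BN$ projective, while the projectivity of $M^{\ast}_A$ forces $N_A$ projective; this yields condition (1) of Definition~\ref{defn:singequi}.

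Next I would transfer the resolution across the tensor functor. Since $_AM$ is finitely generated projective, $M\otimes_B(B\otimes_k A^{\mathrm{op}})=M\otimes_k A^{\mathrm{op}}$ is a summand of $(A\otimes_k A^{\mathrm{op}})^m$, so $M\otimes_B -$ sends projective $B$-$A$-bimodules to projective $A$-$A$-bimodules; it is also exact because $M_B$ is flat. Applied to the truncation $0\to N\to P_{n-1}\to\cdots\to P_0\to M^{\ast}\to 0$, this gives an exact sequence of $A^e$-modules with projective middle terms, whence $M\otimes_B N\simeq \Omega^n_{A^e}(M\otimes_B M^{\ast})$ in $A^e\mbox{-\underline{mod}}$. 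The symmetric argument, applying $-\otimes_A M$ (exact because $_AM$ is projective, and sending projective $B$-$A$-bimodules to projective $B$-$B$-bimodules because $M_B$ is finitely generated projective), yields $N\otimes_A M\simeq \Omega^n_{B^e}(M^{\ast}\otimes_A M)$ in $B^e\mbox{-\underline{mod}}$. So condition (2) will follow once I show that $M\otimes_B M^{\ast}$ agrees with $A$ in $\mathbf{D}_{\rm sg}(A^e)$ and $M^{\ast}\otimes_A M$ agrees with $B$ in $\mathbf{D}_{\rm sg}(B^e)$, after which $n$ may be enlarged beyond the projective dimensions of the two cones.

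The crux of the proof is therefore to show that the counit $\epsilon\colon M\otimes_B M^{\ast}\to A$, $m\otimes f\mapsto f(m)$, and the unit $\eta\colon B\to M^{\ast}\otimes_A M$ of the adjunction $(M\otimes_B -,\,M^{\ast}\otimes_A -)$ have cones of finite projective dimension as $A^e$- and $B^e$-modules respectively. One-sided perfectness is comparatively easy: as left modules, $M^{\ast}\otimes_A -$ descends to a functor on $\mathbf{D}_{\rm sg}(A)$ (since $_BM^{\ast}$ having finite projective dimension makes $M^{\ast}$ a perfect $B$-complex) and is the quasi-inverse of the equivalence $M\otimes_B -$, so $\epsilon$ and $\eta$ become isomorphisms in the one-sided singularity categories; as modules on the other side, one checks directly that $M\otimes_B M^{\ast}$ is right-$A$-finitely-generated-projective (because $M_B$ is a summand of $B^m$ and $M^{\ast}_A$ is f.g.\ projective) and $M^{\ast}\otimes_A M$ has finite left-$B$-projective dimension (because $_AM$ is a summand of $A^n$ and $_BM^{\ast}$ has finite projective dimension). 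The main obstacle is to upgrade this one-sided perfectness to bimodule-level perfectness of the cones in $\mathbf{D}(A^e\mbox{-mod})$ and $\mathbf{D}(B^e\mbox{-mod})$; this is exactly where the separability hypotheses on $A/\mathrm{rad}(A)$ and $B/\mathrm{rad}(B)$ over $k$ enter in an essential way, providing the required control over bimodule projective resolutions in terms of one-sided ones. Granting this, enlarging $n$ beyond the two projective dimensions gives $M\otimes_B N\simeq\Omega^n_{A^e}(A)$ in $A^e\mbox{-\underline{mod}}$ and $N\otimes_A M\simeq\Omega^n_{B^e}(B)$ in $B^e\mbox{-\underline{mod}}$, so that $(M,N)$ defines a singular equivalence with level $n$.
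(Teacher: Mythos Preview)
Your proposal is correct and follows essentially the same route as the paper: take $N$ to be a high bimodule syzygy of $M^{\ast}=\mathrm{Hom}_A(M,A)$, use the adjunction $(M\otimes_B-,\,M^{\ast}\otimes_A-)$ to see that the unit $\eta$ and counit $\varepsilon$ become isomorphisms in the one-sided singularity categories, and then invoke separability to upgrade this to perfectness of the cones as bimodules (the paper packages this last step as Lemma~\ref{lem:sepa}, which in turn cites \cite[Lemma~3.5]{Da}). Your ``granting this'' is exactly the content of that lemma, so nothing is missing beyond what the paper itself cites externally.
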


\begin{proof}
Set $N'={\rm Hom}_A(M, A)$. Assume that the projective dimension of $N'$  as a left $B$-module is $c$. Since $_AM$ is finitely generated projective, there is a canonical isomorphism of $B$-$B$-bimodules
 $${\rm can}\colon N'\otimes_A M\stackrel{\sim}\longrightarrow {\rm Hom}_A(M, M), \quad f\otimes m\mapsto (x\mapsto f(x).m).$$
The following map is a morphism of $A$-$A$-bimodules
$$\varepsilon\colon M\otimes_B N'\longrightarrow A, \quad m\otimes f\mapsto f(m).$$
Dually, there is a morphism of $B$-$B$-bimodules
$$\eta\colon B\longrightarrow N'\otimes_A M$$
such that ${\rm can}\circ \eta$ sends $b\in B$ to the right action of $b$ on $M$.

 We observe that there is an isomorphism of functors from $A\mbox{-mod}$ to $B\mbox{-mod}$:
$${\rm Hom}_A(M, -)\simeq N'\otimes_A-.$$
Consequently, we have an adjoint pair $(M\otimes_B-, N'\otimes_A-)$ between $A\mbox{-mod}$ and $B\mbox{-mod}$. Moreover, its unit is given by $\eta\otimes_B-$, and its counit is given by $\varepsilon\otimes_A-$.

The above adjoint pair induces,  in a straightforward manner,  an adjoint pair between $\mathbf{D}_{\rm sg}(B)$ and $\mathbf{D}_{\rm sg}(A)$ with the induced unit and counit; compare \cite[Lemma~1.2]{Or04}. By assumption,  $M\otimes_B- \colon \mathbf{D}_{\rm sg}(B)\rightarrow \mathbf{D}_{\rm sg}(A)$ is an equivalence. Then the induced adjoint pair gives rise to mutually inverse equivalences. In particular, both the unit and counit are isomorphisms. In other words, for any bounded complex $Y$ of $B$-modules and any bounded complex $Z$ of $A$-modules, $\eta\otimes_B Y$ and $\varepsilon \otimes_A Z$ are isomorphisms in $\mathbf{D}_{\rm sg}(B)$ and $\mathbf{D}_{\rm sg}(A)$, respectively.

Applying Lemma~\ref{lem:sepa} below to $\eta$, there is a sufficiently large $a$ such that $\eta$ induces an isomorphism
$$\Omega_{B^e}^a(B)\simeq \Omega_{B^e}^a(N'\otimes_A M)= \Omega_{B\mbox{-}A}^a(N')\otimes_A M$$
in $B^e\mbox{-\underline{mod}}$.  Similarly, there is a sufficiently large $b$ such that $\varepsilon$  induces an isomorphism
$$M\otimes_B\Omega_{B\mbox{-}A}^b(N')=\Omega_{A^e}^b(M\otimes_B N')\simeq \Omega_{A^e}^b(A)$$
in $A^e\mbox{-\underline{mod}}$. Now, we take $n={\rm max}\{a, b, c\}$. Then we conclude that $(M, \Omega_{B\mbox{-}A}^n(N'))$ defines a singular equivalence with level $n$.
 \end{proof}

The following lemma is standard.

\begin{lem}\label{lem:sepa}
Let $A$ be a finite dimensional algebra such that $A/{\rm rad}(A)$ is separable over $k$. Let $U, V$ be two $A$-$A$-bimodules such that the underlying one-sided modules all have finite projective dimension. Let $f\colon U\rightarrow V$ be a morphism of $A$-$A$-bimodules such that $f\otimes_A^\mathbb{L} Z$ is an isomorphism in $\mathbf{D}_{\rm sg}(A)$ for  any bounded complex $Z$ of $A$-modules. Then there is a sufficiently large $b\geq 0$ such that $f$ induces an isomorphism $\Omega_{A^e}^b(U)\simeq \Omega_{A^e}^b(V)$ in $A^e\mbox{-\underline{\rm mod}}$.
\end{lem}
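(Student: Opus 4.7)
Let $C := {\rm cone}(f)$, viewed as the two-term complex $[U \xrightarrow{f} V]$ of $A$-$A$-bimodules. The desired conclusion---that $\Omega_{A^e}^b(f)$ is an isomorphism in $A^e\mbox{-\underline{mod}}$ for some large $b$---will follow once we show that $C$ is perfect as an object of $\mathbf{D}^b(A^e\mbox{-mod})$. Indeed, a bounded projective $A^e$-resolution of $C$ of length $d$ forces the $b$-th bimodule syzygy of $C$ to be projective for $b\geq d$, whence $\Omega_{A^e}^b(f)$ becomes an isomorphism modulo projectives.

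By the one-sided finite projective dimension assumption on $U$ and $V$, the cone $C$ has finite projective dimension on each side over $A$. Combined with the hypothesis on $f$, we also have $C \otimes_A^\mathbb{L} Z \in {\bf per}(A)$ for every bounded complex $Z$ of left $A$-modules. The task is now to promote these to the bimodule statement; this is where the separability of $E := A/{\rm rad}(A)$ enters crucially.

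Since $A^e$ is finite-dimensional, $C$ is perfect over $A^e$ if and only if ${\rm Ext}^n_{A^e}(C, A^e/{\rm rad}(A^e)) = 0$ for all $n \gg 0$. By separability, $A^e/{\rm rad}(A^e) \simeq E \otimes_k E^{\rm op}$, and since separable $k$-algebras are symmetric, this bimodule may be identified with ${\rm Hom}_k(E, E)$ through the symmetrizing form on $E$. The tensor-Hom adjunction for bimodules then yields a natural isomorphism
$${\rm RHom}_{A^e}(C, {\rm Hom}_k(E, E)) \simeq {\rm RHom}_A(C \otimes_A^\mathbb{L} E, E).$$
Applying the hypothesis on $f$ with $Z = E$, the right-hand side is ${\rm RHom}_A$ of a perfect complex into a finite-dimensional $A$-module, hence has bounded cohomology. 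Transferring back gives the required Ext vanishing, and $C$ is perfect over $A^e$, as desired.

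The main obstacle I anticipate is the bimodule identification $A^e/{\rm rad}(A^e) \simeq {\rm Hom}_k(E, E)$, which relies on $E$ being a symmetric $k$-algebra (a consequence of separability via the classification of separable algebras as products of matrix algebras over central separable division algebras, equipped with the reduced trace). An alternative that sidesteps the identification is to enumerate the finitely many simple $E \otimes_k E^{\rm op}$-modules, apply the analogous tensor-Hom adjunction to each, and conclude by devissage on composition length. In either variant, separability is precisely the bridge that promotes the one-sided cohomological information about $C$ to its two-sided counterpart.
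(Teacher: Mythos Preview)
Your argument is correct and follows the same overall strategy as the paper: both reduce the conclusion to showing that $C=\operatorname{cone}(f)$ is perfect in $\mathbf{D}^b(A^e\mbox{-mod})$, and then deduce that $\Omega_{A^e}^b(f)$ is a stable isomorphism for $b\gg 0$. The paper dispatches the perfectness of $C$ in one line by citing \cite[Lemma~3.5]{Da}, whereas you supply a self-contained proof of that step via the Ext-vanishing criterion, the bimodule identification $A^e/{\rm rad}(A^e)\simeq {\rm Hom}_k(E,E)$ coming from a symmetrizing form on the separable algebra $E$, and the tensor--Hom adjunction ${\rm RHom}_{A^e}(C,{\rm Hom}_k(E,E))\simeq {\rm RHom}_A(C\otimes_A^{\mathbb L}E,E)$. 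This makes transparent exactly where the separability hypothesis is used, which the paper's citation hides; your d\'evissage alternative also works and avoids invoking the symmetry of $E$. One small point: your sentence about ``the $b$-th bimodule syzygy of $C$'' is informal since $C$ is a two-term complex; the cleanest way to make it precise is to first replace $f$ by a stably equivalent surjection (add a projective cover of $V$ to the source), so that $C\simeq K[1]$ for $K=\ker f$, and then the ordinary syzygy argument applies. The paper glosses over this passage with ``it follows immediately'', so you are in good company.
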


\begin{proof}
We view $f$ as a cochain map between stalk complexes of bimodules. By \cite[Lemma~3.5]{Da}, the mapping cone of $f$ is a perfect complex of $A$-$A$-bimodules. Then it follows immediately that $f$ induces an isomorphism between the higher syzygies in the stable category of $A$-$A$-bimodules.
\end{proof}

The following result is a variant of Proposition~\ref{prop:sing}.

\begin{prop}\label{prop:sing2}
Suppose that both $A/{\rm rad}(A)$ and $B/{\rm rad}(B)$ are separable over $k$. Assume that ${\rm Hom}_{B^{\rm op}}(M, B)$ has finite projective dimension as a right $A$-module, and that $M\otimes_B- \colon \mathbf{D}_{\rm sg}(B)\rightarrow \mathbf{D}_{\rm sg}(A)$ is an equivalence. Then there is an $B$-$A$-bimodule $N$ such that $(M, N)$ defines a singular equivalence with level.
\end{prop}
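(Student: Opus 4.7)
The plan is to mimic the proof of Proposition~\ref{prop:sing} with the $B$-dual $N''={\rm Hom}_{B^{\rm op}}(M, B)$ playing the role previously played by the $A$-dual $N'$. Denote by $c$ the projective dimension of $N''$ as a right $A$-module. The two key observations replacing those of the original proof are: (a) since $M_B$ is finitely generated projective, $N''$ is automatically finitely generated projective as a left $B$-module, being a summand of ${\rm Hom}_{B^{\rm op}}(B^c,B)=B^c$; and (b) there is a canonical isomorphism ${\rm can}\colon M\otimes_B N''\stackrel{\sim}\longrightarrow {\rm Hom}_{B^{\rm op}}(M, M)$ of $A$-$A$-bimodules, $m\otimes f\mapsto (x\mapsto mf(x))$, yielding in turn a natural isomorphism ${\rm Hom}_{B^{\rm op}}(M,-)\simeq -\otimes_B N''$ of functors from mod-$B$ to mod-$A$.

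Using these, I would exhibit $(N''\otimes_A-, M\otimes_B-)$ as an adjoint pair between $A\mbox{-mod}$ and $B\mbox{-mod}$, with $N''\otimes_A-$ as the \emph{left} adjoint (opposite to the situation in Proposition~\ref{prop:sing}, where the $A$-dual gave a right adjoint). The unit is an $A$-$A$-bimodule map $\eta\colon A\to M\otimes_B N''$ sending $a$ to the element corresponding, via ${\rm can}$, to left multiplication by $a$; the counit is the evaluation map $\epsilon\colon N''\otimes_A M\to B$, $f\otimes m\mapsto f(m)$, which is a $B$-$B$-bimodule map. By (a), $N''\otimes_A-$ preserves perfect complexes, so the adjoint pair descends to the singularity categories. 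The hypothesis that $M\otimes_B-$ is an equivalence then forces $N''\otimes_A-$ to be its quasi-inverse on $\mathbf{D}_{\rm sg}$, and both $\eta\otimes_A Z$ and $\epsilon\otimes_B Y$ become isomorphisms in the appropriate singularity categories for any bounded complexes $Z$ and $Y$.

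The remainder of the argument then parallels the end of Proposition~\ref{prop:sing}: apply Lemma~\ref{lem:sepa} to $\eta$ and $\epsilon$ to obtain integers $a,b\geq 0$ such that they induce isomorphisms $\Omega_{A^e}^a(A)\simeq \Omega_{A^e}^a(M\otimes_B N'')$ and $\Omega_{B^e}^b(N''\otimes_A M)\simeq \Omega_{B^e}^b(B)$ in the stable bimodule categories. Setting $n=\max\{a,b,c\}$ and $N=\Omega_{B\mbox{-}A}^n(N'')$, the pair $(M, N)$ will define a singular equivalence with level $n$: the condition $_BN$ projective is automatic from (a), whereas $N_A$ projective requires $n\geq c$, and the two bimodule-stable isomorphisms follow from the Lemma~\ref{lem:sepa} output together with the fact that $M\otimes_B-$ and $-\otimes_A M$ each send bimodule projectives to bimodule projectives.

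The main technical obstacle is the one-sided projective-dimension hypothesis in Lemma~\ref{lem:sepa}. Concretely, one must verify that all four of $_A(M\otimes_B N'')$, $(M\otimes_B N'')_A$, $_B(N''\otimes_A M)$, $(N''\otimes_A M)_B$ have finite projective dimension. The bounds on the left $A$-side of $M\otimes_B N''$ and the left $B$-side of $N''\otimes_A M$ are obtained by the summand trick enabled by (a) together with $_AM$ projective. The bound $(M\otimes_B N'')_A\leq c$ follows by writing $M_B$ as a summand of $B^c$ and tensoring with $N''$; the subtler bound $(N''\otimes_A M)_B\leq c$ requires taking a right $A$-projective resolution of $N''$ and tensoring with $M$ over $A$, which produces a length-$c$ resolution by right $B$-projectives because $_AM$ is flat while $M_B$ is projective.
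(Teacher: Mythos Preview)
Your proof is correct and follows essentially the same route as the paper's own: set $N''={\rm Hom}_{B^{\rm op}}(M,B)$, exhibit the adjoint pair $(N''\otimes_A-,\, M\otimes_B-)$, descend it to the singularity categories, apply Lemma~\ref{lem:sepa} to the unit and counit, and take $N=\Omega^n_{B\mbox{-}A}(N'')$ for $n$ large. The paper obtains the adjunction in one line via $M\otimes_B-\simeq {\rm Hom}_B(N'',-)$ (double-dualizing using $M_B$ finitely generated projective) rather than through your explicit unit/counit, and it omits the one-sided finite-projective-dimension checks for Lemma~\ref{lem:sepa} that you carefully supply; one cosmetic point is that you overload the letter $c$, using it both for the projective dimension of $N''_A$ and for a free-module rank.
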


\begin{proof}
Set $N''={\rm Hom}_{\rm B^{\rm op}}(M, B)$. We observe that $M\otimes_B-\simeq {\rm Hom}_B(N'', -)$. Consequently, we have an adjoint pair $(N''\otimes_A-, M\otimes_B-)$ between $A\mbox{-mod}$ and $B\mbox{-mod}$. We omit  the remaining proof, as it is almost the same as the one in Proposition~\ref{prop:sing}. We just mention that $N$ is chosen to $\Omega^{m}_{B\mbox{-}A}(N'')$ for sufficiently large $m$.
\end{proof}

\begin{rem}\label{rem:sing}
Let us assume that both the assumptions in Propositions~\ref{prop:sing} and \ref{prop:sing2} hold. By the proof of the two propositions and Remark~\ref{rem:syzygy}(2),  the two dual $B$-$A$-bimodules ${\rm Hom}_{A}(M, A)$ and ${\rm Hom}_{B^{\rm op}}(M, B)$ are related: there is a sufficiently large $n$ such that
$$\Omega_{B\mbox{-}A}^n({\rm Hom}_{A}(M, A)) \simeq \Omega_{B\mbox{-}A}^n({\rm Hom}_{B^{\rm op}}(M, B)) $$
in the stable category of $B$-$A$-bimodules.
\end{rem}

Recall that an algebra $A$ is \emph{Gorenstein} \cite{Buc, Hap91} provided that both $_AA$ and $A_A$ have finite injective dimension, or equivalently, any $A$-module has finite injective dimension if and only if it has finite projective dimension.

\begin{lem}\label{lem:Gor}
Let $A$ and $B$ be two Gorenstein algebras.  Then ${\rm Hom}_{A}(M, A)$ has finite projective dimension as a left $B$-module, and ${\rm Hom}_{B^{\rm op}}(M, B)$ has finite projective dimension as a right $A$-module.
\end{lem}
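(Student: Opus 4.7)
The plan is to exhibit a finite injective resolution of $\mathrm{Hom}_A(M,A)$ in $B\text{-mod}$ and then invoke the characterization of Gorenstein algebras. The key observation is that the functor $\mathrm{Hom}_A(M,-) : A\text{-mod} \to B\text{-mod}$ (with the left $B$-action coming from the right $B$-action on $M$) has two good properties in our setting: it is exact, because $_AM$ is projective; and it preserves injective objects, because its left adjoint $M \otimes_B - : B\text{-mod}\to A\text{-mod}$ is exact (as $M_B$ is projective, hence flat).

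With these two properties in hand, I would proceed as follows. Since $A$ is Gorenstein, $_AA$ has finite injective dimension, so choose a finite injective resolution $0\to A\to I^0\to I^1\to\cdots\to I^d\to 0$ of $_AA$ in $A\text{-mod}$. Applying the exact functor $\mathrm{Hom}_A(M,-)$ yields an exact sequence $0\to \mathrm{Hom}_A(M,A)\to \mathrm{Hom}_A(M,I^0)\to\cdots\to \mathrm{Hom}_A(M,I^d)\to 0$ of left $B$-modules, and each $\mathrm{Hom}_A(M,I^i)$ is an injective $B$-module by the adjointness argument above. Hence $\mathrm{Hom}_A(M,A)$ has injective dimension at most $d$ as a left $B$-module. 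Because $B$ is Gorenstein, a $B$-module has finite injective dimension if and only if it has finite projective dimension, so $\mathrm{Hom}_A(M,A)$ has finite projective dimension as a left $B$-module, as desired.

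For the second assertion, the argument is entirely symmetric. Working with right modules, the functor $\mathrm{Hom}_{B^{\mathrm{op}}}(M,-) : \text{mod-}B \to \text{mod-}A$ is exact because $M_B$ is projective, and it preserves injectives because its left adjoint $-\otimes_A M : \text{mod-}A \to \text{mod-}B$ is exact (as $_AM$ is projective). Applying this functor to a finite injective resolution of $B_B$, which exists because $B$ is Gorenstein, produces a finite injective resolution of $\mathrm{Hom}_{B^{\mathrm{op}}}(M,B)$ in $\text{mod-}A$; invoking the Gorenstein property of $A$ converts finite injective dimension to finite projective dimension.

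There is no genuine obstacle here; the only point to verify carefully is the exact-adjoint-preserves-injectives step, and the bookkeeping of which of $_AM$ and $M_B$ being projective is responsible for each of exactness and injective-preservation on each side. Everything else is a direct application of the definition of Gorenstein algebras.
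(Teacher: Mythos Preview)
Your proof is correct and follows essentially the same approach as the paper: the paper's proof also uses that $\mathrm{Hom}_A(M,-)$ is exact and preserves injectives, applies it to conclude $\mathrm{Hom}_A(M,A)$ has finite injective dimension over $B$, and then invokes the Gorenstein property of $B$. You have simply spelled out the adjointness reason for injective-preservation more explicitly than the paper does.
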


\begin{proof}
The functor  ${\rm Hom}_A(M, -)\colon A\mbox{-mod}\rightarrow B\mbox{-mod}$ is exact and preserves injective modules. It follows that it preserves modules of finite injective dimension. Since $_AA$ has finite injective dimension, so does the left $B$-module ${\rm Hom}_A(M, A)$. As $B$ is Gorenstein, then ${\rm Hom}_A(M, A)$ has finite projective dimension. This proves the first half, and  the second half is similar.
\end{proof}

In view of Lemma~\ref{lem:Gor} and Remark~\ref{rem:sing}, we have the following immediate consequence; compare \cite[Theorem]{Da}.

\begin{cor}\label{cor:Gor}
Let $A$ and $B$ be two Gorenstein algebras such that both $A/{{\rm rad}(A)}$ and $B/{{\rm rad}(B)}$ are separable over $k$. Assume that $M\otimes_B- \colon \mathbf{D}_{\rm sg}(B)\rightarrow \mathbf{D}_{\rm sg}(A)$ is an equivalence. Then there is a sufficiently large $n$ such that there is an isomorphism
$$N:=\Omega_{B\mbox{-}A}^n({\rm Hom}_{A}(M, A)) \simeq \Omega_{B\mbox{-}A}^n({\rm Hom}_{B^{\rm op}}(M, B)) $$
in the stable category of $B$-$A$-bimodules and that $(M, N)$ defines a singular equivalence with level $n$. \hfill $\square$
\end{cor}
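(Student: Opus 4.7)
The corollary follows by assembling ingredients already in place: Lemma~\ref{lem:Gor} verifies the finite projective dimension hypotheses of both Proposition~\ref{prop:sing} and Proposition~\ref{prop:sing2} simultaneously, so one gets singular equivalences with level from \emph{each} of the two dual bimodules, and Remark~\ref{rem:sing} then identifies their high syzygies in the stable category. The strategy is therefore to carry out these invocations in order and then argue that a single value of $n$ can be chosen to serve all the roles.

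First I would apply Lemma~\ref{lem:Gor}: since $A$ and $B$ are Gorenstein, the left $B$-module $N'={\rm Hom}_A(M,A)$ has finite projective dimension, and the right $A$-module $N''={\rm Hom}_{B^{\rm op}}(M,B)$ has finite projective dimension. Combined with the separability hypothesis and the assumption that $M\otimes_B-$ is a singular equivalence, this places us in the setting of both Propositions~\ref{prop:sing} and \ref{prop:sing2}. Applying them yields integers $n_1,n_2\ge 0$ such that $(M,\Omega_{B\mbox{-}A}^{n_1}(N'))$ and $(M,\Omega_{B\mbox{-}A}^{n_2}(N''))$ define singular equivalences with level $n_1$ and $n_2$, respectively. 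Remark~\ref{rem:sing} now supplies an integer $n_3$ with
\[
\Omega_{B\mbox{-}A}^{n_3}(N')\;\simeq\;\Omega_{B\mbox{-}A}^{n_3}(N'')
\]
in the stable category of $B$-$A$-bimodules.

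To produce the common $n$ asserted in the statement, I would set $n=\max\{n_1,n_2,n_3\}$ and take
\[
N:=\Omega_{B\mbox{-}A}^{n}(N')\;\simeq\;\Omega_{B\mbox{-}A}^{n}(N'').
\]
The isomorphism here is obtained by applying $\Omega_{B\mbox{-}A}^{n-n_3}$ to the one given by Remark~\ref{rem:sing}, which is legitimate because the syzygy functor on the stable category is well defined and preserves isomorphisms. For the claim that $(M,N)$ defines a singular equivalence with level $n$, I would start from $(M,\Omega_{B\mbox{-}A}^{n_1}(N'))$, which is at level $n_1$, and apply Remark~\ref{rem:syzygy}(1) repeatedly $n-n_1$ times to raise its level to $n$ while replacing the second component by $\Omega_{B\mbox{-}A}^{n}(N')=N$.

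The only genuinely non-routine input is Remark~\ref{rem:sing}, whose justification ultimately depends on Remark~\ref{rem:syzygy}(2) and the tensor computation $M\otimes_B N'\otimes_A M$ carried out in two different ways against the two triangle equivalences produced by the two propositions; this is the step whose details one should double-check, but it is already stated in the excerpt. Everything else amounts to bookkeeping on indices, so no further obstacle arises.
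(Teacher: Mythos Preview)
Your proof is correct and follows exactly the route the paper indicates: the corollary is stated there as an immediate consequence of Lemma~\ref{lem:Gor} and Remark~\ref{rem:sing}, and you have simply spelled out the index bookkeeping via Remark~\ref{rem:syzygy}(1). One small slip in your parenthetical: the tensor product underlying Remark~\ref{rem:sing} is $N_1\otimes_A M\otimes_B N_2$ (with $N_1,N_2$ the syzygies of the two duals at a common level), not $M\otimes_B N'\otimes_A M$; but since you invoke Remark~\ref{rem:sing} as already given, this does not affect the argument.
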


\section{Quadratic monomial algebras and relation quivers}
In this section, we recall from \cite{Chen18} a singular equivalence between a quadratic monomial algebra and its associated algebra with radical square zero.

We fix a finite quiver $Q=(Q_0, Q_1; s,t)$. Here, $Q_0$ denotes the finite set of vertices, $Q_1$ denotes the finite set of arrows, and  $s,t\colon Q_1\rightarrow Q_0$ are maps which assign to each arrow $\alpha$ its starting vertex $s(\alpha)$ and its terminating vertex $t(\alpha)$.

 A path $p$ of length $n$ in $Q$ is a sequence $p=\alpha_n\cdots \alpha_2\alpha_1$ of arrows such that $s(\alpha_i)=t(\alpha_{i-1})$ for $2\leq i\leq n$; moreover, we define its starting vertex $s(p)=s(\alpha_1)$ and its terminating vertex $t(p)=t(\alpha_n)$.  We identify a path of length one with  an arrow. To each vertex $i$, we associate a trivial path $e_i$ of length zero, and set $s(e_i)=i=t(e_i)$.

For two paths $p$ and $q$ with $s(p)=t(q)$, we write $pq$ for their concatenation. As convention, we have $p=pe_{s(p)}=e_{t(p)}p$. For two paths $p$ and $q$ in $Q$, we say that $q$ is a \emph{sub-path} of $p$ provided that $p=p''qp'$ for some paths $p''$ and $p'$.

The path algebra $kQ$ is defined as follows. As a $k$-vector space, it has a basis consisting of all the paths in $Q$. For two paths $p$ and $q$, their multiplication is given by the concatenation $pq$ if $s(p)=t(q)$; it is zero, otherwise.  The unit of $kQ$ equals  $\sum_{i\in Q_0}e_i$.

Denote by $J$ the two-sided ideal of $kQ$ generated by arrows. Then $J^d$ is spanned by all the paths of length at least $d$ for each $d\geq 2$. A two-sided ideal $I$ of $kQ$ is \emph{admissible},  provided that $J^d\subseteq I\subseteq J^2$ for some $d\geq 2$. In this case, the quotient algebra $A=kQ/I$ is finite-dimensional.

We recall that an admissible ideal $I$ of $kQ$ is quadratic monomial provided that it is generated by some paths of length  two. In this case, the quotient algebra $A=kQ/I$ is called a \emph{quadratic monomial algebra}.

In what follows, $A=kQ/I$ is a fixed quadratic monomial algebra. We denote by $\mathbf{F}$ the set of paths of length two contained in $I$. Here, the letter ``F" stands for forbidden paths.

As usual, a path $p$ in $Q$ is \emph{nonzero} in $A$,  provided that it does not belong to $I$, or equivalently, $p$ does not contain a sub-path in $\mathbf{F}$. In this case, we will abuse the image $p+I$ in $A=kQ/I$ with $p$. Therefore, the set of nonzero paths forms a $k$-basis for $A$.

For each nonzero path $p$, we consider the left ideal $Ap$ generated by $p$, which has a $k$-basis given by the nonzero paths $q$ such that $q=q'p$ for some path $q'$. We observe that for a vertex $i$, $Ae_i$ is an indecomposable projective $A$-module.   Then we have a projective cover $\pi_p\colon Ae_{t(p)}\rightarrow Ap$ sending $e_{t(p)}$ to $p$.

The following fact is contained in \cite[Lemma~4.1(2)]{Chen18}: for an arrow $\alpha$, we have  an exact sequence of $A$-modules
\begin{align}\label{equ:A}
 \bigoplus_{\{\beta\in Q_1\; |\; \beta\alpha \in \mathbf{F}\}} Ae_{t(\beta)} \stackrel{\beta}\longrightarrow Ae_{t(\alpha)}\stackrel{\pi_\alpha}\longrightarrow A\alpha \longrightarrow 0,
\end{align}
where for each $\beta$ in the index set, $\beta\colon A e_{t(\beta)}\rightarrow A e_{t(\alpha)}$ means the $A$-module morphism sending  $q$ to $q\beta$, that is, the multiplication by $\beta$ from the right.

 The \emph{relation quiver} $\mathcal{R}$ of $A=kQ/I$ is defined as follows:  its vertices are given by arrows in $Q$, and there is an arrow $[\beta \alpha]$ from the vertex $\alpha$ to the vertex $\beta$ for each element $\beta\alpha$ in $\mathbf{F}$; see \cite[Definition~5.2]{CSZ}.

Consider the corresponding algebra $B=k\mathcal{R}/J^2$ with radical square zero. Then $B$ has a $k$-basis given by
$$\{e_\alpha\; |\; \alpha\in Q_1\} \cup \{ [\beta\alpha]\;|\; \beta\alpha\in \mathbf{F}\}.$$
Its multiplication is completely determined by the following identities:
$$e_\alpha\cdot  e_\beta=\delta_{\alpha, \beta}\; e_{\alpha}, \; [\beta\alpha]=e_{\beta} \cdot [\beta\alpha]\cdot e_\alpha, \mbox{ and } [\beta\alpha] \cdot [\beta'\alpha']=0.$$
Here, $\delta_{\alpha, \beta}$ is the Kronecker symbol. The algebra $B$ is said to \emph{associated} to $A$.

For each vertex $\alpha$ in $\mathcal{R}$, we denote by $S_\alpha$ the corresponding simple $B$-module. The projective cover $Be_\alpha\rightarrow S_\alpha$ annihilates the radical of $Be_\alpha$, namely the $B$-submodule spanned by $\{[\beta\alpha]\; |\; \beta\in Q_1, \beta\alpha\in \mathbf{F}\}$. Therefore, we have the following projective presentation
\begin{align}\label{equ:B}
 \bigoplus_{\{\beta\in Q_1\; |\; \beta\alpha \in \mathbf{F}\}} Be_\beta \stackrel{[\beta\alpha]}\longrightarrow Be_\alpha\longrightarrow S_\alpha \longrightarrow 0,
\end{align}
where for each $\beta$ in the index set,  $[\beta\alpha]\colon Be_\beta\rightarrow Be_\alpha$ denotes the unique $B$-module morphism sending $e_\beta$ to $[\beta\alpha]$; compare \cite[(4.3)]{Chen18}.

The following result is implicitly contained in \cite{Chen18}.

\begin{prop}\label{prop:Chen18}
Keep the notation as above. Then there is a unique triangle functor $F\colon \mathbf{D}_{\rm sg}(B)\rightarrow \mathbf{D}_{\rm sg}(A)$ satisfying $F(S_\alpha)\simeq A\alpha$ for each $\alpha\in Q_1$; moreover, such a functor is necessarily an equivalence.
\end{prop}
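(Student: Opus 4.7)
The plan is to reduce the proposition to the parallel between the projective presentations (\ref{equ:A}) and (\ref{equ:B}), combined with generation arguments for both singularity categories. First, since $B$ has radical square zero and its simple modules are precisely $\{S_\alpha \mid \alpha \in Q_1\}$, every finitely generated $B$-module admits a finite filtration by these $S_\alpha$, so they generate $\mathbf{D}_{\rm sg}(B)$ as a triangulated category. The uniqueness statement then reduces to knowing the values of $F$ on the generators $S_\alpha$ together with compatibility with the triangles encoded by (\ref{equ:B}).

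For existence, the key input is a syzygy match. From (\ref{equ:B}), one reads off $\Omega_B(S_\alpha) \simeq \bigoplus_{\{\beta \mid \beta\alpha \in \mathbf{F}\}} S_\beta$ in the stable module category of $B$, because $B$ has radical square zero. For $A$, the exact sequence (\ref{equ:A}) identifies the syzygy of $A\alpha$ with the image $\sum A\beta \subseteq Ae_{t(\alpha)}$, and one must check that this image is isomorphic to $\bigoplus A\beta$ modulo a perfect contribution. Once this is established, $\Omega_A(A\alpha) \simeq \bigoplus_{\{\beta \mid \beta\alpha \in \mathbf{F}\}} A\beta$ in $\mathbf{D}_{\rm sg}(A)$, so the recipe $S_\alpha \mapsto A\alpha$ becomes compatible with iterated syzygies.

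To build $F$ itself, I would transport the minimal projective resolution of $S_\alpha$---whose differentials are matrices assembled from the arrows $[\beta\alpha]$ in (\ref{equ:B})---to the corresponding complex whose terms are the $Ae_{t(\alpha)}$'s and whose differentials are matrices of right multiplications by the arrows $\beta$, exactly as in (\ref{equ:A}). This assignment extends by the triangulated structure to all of $\mathbf{D}_{\rm sg}(B)$. A parallel generation argument on the $A$-side---showing that $\mathbf{D}_{\rm sg}(A)$ is generated by the $A\alpha$'s by iterating the syzygy computation on an arbitrary finitely generated $A$-module and observing that the outputs are sums of $A\alpha$'s up to perfect complexes---guarantees that $F$ is essentially surjective. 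Fullness and faithfulness then follow from comparing the Hom-spaces, which agree thanks to the parallel structure of (\ref{equ:A}) and (\ref{equ:B}).

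The main obstacle I expect is the syzygy comparison on the $A$-side: verifying that $\Omega_A(A\alpha) \simeq \bigoplus_\beta A\beta$ in $\mathbf{D}_{\rm sg}(A)$. The sum $\sum A\beta \subseteq Ae_{t(\alpha)}$ need not be direct at the module level, and the crux is to localize any obstruction to directness, together with any non-injectivity of the leftmost map in (\ref{equ:A}), into the perfect subcategory so that it vanishes after passing to the singularity category. This is where the fine combinatorics of the relation quiver $\mathcal{R}$ and the forbidden-path set $\mathbf{F}$ is used in an essential way.
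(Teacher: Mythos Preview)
Your proposal has a genuine gap at the step where you ``extend by the triangulated structure.'' Triangulated categories do not come with a universal property allowing one to define a triangle functor on a generating set of objects (together with some chosen triangles) and then extend. Knowing $F(S_\alpha)\simeq A\alpha$ and matching the syzygy triangles encoded by (\ref{equ:A}) and (\ref{equ:B}) does not by itself produce a functor on all of $\mathbf{D}_{\rm sg}(B)$, nor does it pin one down up to isomorphism. The same issue undermines your uniqueness argument: two triangle functors agreeing on a set of generating objects need not be isomorphic; one must control morphisms and higher coherence, which a bare triangulated structure does not encode. Your full-faithfulness claim (``comparing the Hom-spaces, which agree thanks to the parallel structure'') is likewise a restatement of the conclusion rather than an argument.

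The paper sidesteps all of this by working with stabilization. Existence is imported from \cite{Chen18}, and the real content here is uniqueness: one identifies $\mathbf{D}_{\rm sg}(B)$ with the stabilization $\mathcal{S}(B\mbox{-\underline{ssmod}})$ and $\mathbf{D}_{\rm sg}(A)$ with $\mathcal{S}(A\mbox{-\underline{mod}})$ via the canonical functors (\ref{equ:can}). The assignment $S_\alpha\mapsto A\alpha$ determines a unique additive functor $H\colon B\mbox{-\underline{ssmod}}\to A\mbox{-\underline{mod}}$, and the hypothesis on $F$ forces the diagram with $H$ and the canonical functors to commute. The universal property of stabilization then identifies $F$ with $\mathcal{S}(H)$. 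The point is that the genuine universal property lives at the level of stable categories and their stabilizations, not at the level of the triangulated singularity categories themselves. Incidentally, your ``main obstacle'' is less severe than you fear: for quadratic monomial $A$ the kernel of $\pi_\alpha$ in (\ref{equ:A}) is already the direct sum $\bigoplus_{\beta\alpha\in\mathbf{F}} A\beta$ at the module level, since distinct arrows $\beta$ give submodules $A\beta\subseteq Ae_{t(\alpha)}$ with disjoint bases of nonzero paths.
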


\begin{proof}
Since such a triangle equivalence $F$ is constructed in \cite[Theorem~4.5]{Chen18}, it suffices to prove the uniqueness. Denote by $B\mbox{-\underline{ssmod}}$ the full subcategory of $B\mbox{-\underline{mod}}$ consisting of semisimple modules. There is a unique $k$-linear functor
$$H\colon B\mbox{-\underline{ssmod}}\longrightarrow A\mbox{-\underline{mod}}$$
 sending each $S_\alpha$ to $A\alpha$; compare \cite[Lemma~4.6]{Chen18}. Here, we implicitly use the following fact: if a simple $B$-module $S_\alpha$ is projective, then the $A$-module $A\alpha$ is also projective.

 The assumption on $F$ implies that the following diagram is commutative.
 \[\xymatrix{
 B\mbox{-\underline{ssmod}}\ar[d]_-{{\rm can}_B} \ar[rr]^{H} && A\mbox{-\underline{mod}}\ar[d]^-{{\rm can}_A}\\
 \mathbf{D}_{\rm sg}(B) \ar[rr]^-{F} && \mathbf{D}_{\rm sg}(A)
 }\]
 Here, the vertical arrows are the canonical functors in (\ref{equ:can}). It is well known that ${\rm can}_A$ identifies $\mathbf{D}_{\rm sg}(A)$ with the stabilization $\mathcal{S}(A\mbox{-\underline{mod}})$ of $A\mbox{-\underline{mod}}$; see \cite[Lemma~3.1]{Chen18}. Since syzygies of any $B$-module are semisimple, ${\rm can}_B$ identifies $\mathbf{D}_{\rm sg}(B)$ with the stabilization $\mathcal{S}(B\mbox{-\underline{ssmod}})$ of $B\mbox{-\underline{ssmod}}$; compare \cite[Corollary~2.3]{Chen18}. Therefore, applying  the universal property of stabilization,  the above commutative diagram implies that $F$ is identified with $$\mathcal{S}(H)\colon \mathcal{S}(B\mbox{-\underline{ssmod}})\longrightarrow \mathcal{S}(A\mbox{-\underline{mod}}), $$
  known as the stabilization of $H$; see \cite[Section~2]{Chen18}. This implies that $F$ is unique.
\end{proof}

\section{An explicit bimodule}\label{sec:M}

Let $A=kQ/I$ be a quadratic monomial algebra with  $\mathcal{R}$ its relation quiver. As in the previous section,  $B=k\mathcal{R}/J^2$ denotes the associated algebra with radical square zero. In this section,  we will construct an explicit $A$-$B$-bimodule $M$, which realizes the singular equivalence in Proposition~\ref{prop:Chen18} by  a tensor functor.

Consider the following set
$$X=\{(p, \alpha)\; |\; \alpha\in Q_1, p \mbox{ a nonzero path in } A \mbox{ satisfying } s(p)=t(\alpha)\}.$$
Here, the nonzero path $p$ is allowed to be trivial, that is, $(e_{t(\alpha)},\alpha)$ lies in $X$. Set $M=kX$ to be the $k$-vector space with $X$ its basis.

The left $A$-action on $M$ is naturally given by the concatenation of paths on the left component $p$. More precisely, for any nonzero path $q$ in $A$, we have
 \begin{equation*}
 q.(p, \alpha)=
 \begin{cases}
 (qp, \alpha) & \mbox{ if } s(q)=t(p) \mbox{ and } qp \mbox{ is a nonzero path in } A;\\
 0& \mbox{ otherwise.}
 \end{cases}
 \end{equation*}

The right $B$-action on $M$ is given such that $(p, \alpha).e_{\beta}=\delta_{\alpha, \beta}\; (p, \alpha)$, where $\delta_{\alpha, \beta}$ is the Kronecker symbol. Moreover, for $(p, \alpha)\in X$ and $\alpha\beta\in \mathbf{F}$, we have
\begin{equation*}
(p, \alpha).[\alpha\beta]=\begin{cases} (p\alpha, \beta) & \mbox{ if } p\alpha \mbox{ is a nonzero path in } A;\\
0 & \mbox{ otherwise.} \end{cases}
\end{equation*}
This action might be visualized as follows:
\[\xymatrix@C=5pt{
&&&&\ar@{~>}[llll]_p  & && \ar[lll]_{\alpha} &  \ar@/_/@{.}[ll] &&\ar[lll]_{\beta}
}\]
Here, we use the dotted curve to indicate that $\alpha\beta$ lies in $\mathbf{F}$, and the wavy arrow to indicate that  $p$ is a path, not necessarily an arrow. We observe that if $\beta\gamma\in \mathbf{F}$, we necessarily have
$$((p, \alpha).[\alpha\beta]).[\beta\gamma]=0.$$
The above action indeed defines a right $B$-module structure on $M$. Then we obtain the required $A$-$B$-bimodule $M$.

\begin{lem}\label{lem:M}
The above $A$-$B$-bimodule $M$ is projective on each side. Moreover, for each $\alpha\in Q_1$, we have an isomorphism $M\otimes_B S_\alpha\simeq A\alpha$ of $A$-modules.
\end{lem}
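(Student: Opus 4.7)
The plan is to exhibit explicit decompositions of $M$ as a left $A$-module and as a right $B$-module into indecomposable projectives, and then to compute $M\otimes_B S_\alpha$ by applying $M\otimes_B-$ to the projective presentation \eqref{equ:B} of $S_\alpha$.

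For the left $A$-structure, I first note that the left $A$-action fixes the second component: $q.(p,\alpha)=(qp,\alpha)$ or $0$, so $M$ decomposes as $M=\bigoplus_{\alpha\in Q_1} M\cdot e_\alpha$ (the sum being over the right idempotents of $B$), where $M\cdot e_\alpha$ has $k$-basis $\{(p,\alpha)\mid p \text{ a nonzero path of } A,\ s(p)=t(\alpha)\}$. The map $(p,\alpha)\mapsto p$ then yields an isomorphism of left $A$-modules $M\cdot e_\alpha\xrightarrow{\sim}Ae_{t(\alpha)}$, since the $A$-basis and $A$-action correspond. This gives ${}_AM\simeq \bigoplus_{\alpha\in Q_1}Ae_{t(\alpha)}$, hence ${}_AM$ is finitely generated projective.

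For the right $B$-structure, I would call a pair $(p,\alpha)\in X$ a \emph{top} if either $p=e_{t(\alpha)}$, or $p$ has length $\geq 1$ whose rightmost arrow $\alpha_1$ satisfies $\alpha_1\alpha\notin \mathbf{F}$. The key combinatorial claim is that $X$ is partitioned into the sets $\{(p,\alpha)\}\cup\{(p\alpha,\beta)\mid \alpha\beta\in\mathbf{F}\}$ indexed by the tops $(p,\alpha)$: uniqueness of the rightmost arrow of $p\alpha$ (which sits at the concatenation) gives existence and uniqueness of the top to which a given element belongs, and I would verify both cases (whether $(q,\beta)\in X$ is itself a top or not). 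For each top $(p,\alpha)$, the right $B$-submodule it generates has the basis just listed, since $p\alpha$ is automatically nonzero (either $p$ trivial or $\alpha_1\alpha\notin\mathbf{F}$) and subsequent multiplications by $[\beta\gamma]$ vanish as $\alpha\beta\in\mathbf{F}$ forces $(p\alpha)\beta=0$. The map $(p,\alpha)\mapsto e_\alpha$, $(p\alpha,\beta)\mapsto [\alpha\beta]$ is then a right $B$-module isomorphism onto $e_\alpha B$. Thus $M_B\simeq \bigoplus_{(p,\alpha)\text{ top}} e_\alpha B$ is finitely generated projective.

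For the final assertion, apply the right-exact functor $M\otimes_B-$ to the presentation \eqref{equ:B} of $S_\alpha$. Using the canonical isomorphism $M\otimes_B Be_\gamma\simeq M\cdot e_\gamma\simeq Ae_{t(\gamma)}$ from step one, the induced map $M\otimes_B Be_\beta\to M\otimes_B Be_\alpha$ sends $(p,\beta)\mapsto (p,\beta)\cdot[\beta\alpha]=(p\beta,\alpha)$ if $p\beta$ is nonzero and $0$ otherwise; under the identifications with $Ae_{t(\beta)}$ and $Ae_{t(\alpha)}$ this is precisely right multiplication by $\beta$, i.e.\ the map appearing in \eqref{equ:A}. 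Hence $M\otimes_B S_\alpha$ is isomorphic to the cokernel in \eqref{equ:A}, namely $A\alpha$, as desired.

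The main obstacle is the right-side decomposition: one must choose the correct notion of ``top'' and verify carefully that every $(q,\beta)\in X$ lies in exactly one of the candidate submodules, which amounts to a small but delicate case analysis on the rightmost arrow of $q$ and its relation to $\mathbf{F}$. The left-module decomposition and the tensor-product computation are then essentially formal.
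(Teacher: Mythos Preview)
Your proof is correct and follows essentially the same approach as the paper. Your notion of ``top'' coincides exactly with the paper's indexing set $D=\{(p,\alpha)\in X\mid p\alpha\text{ is nonzero in }A\}$, since for a nonzero path $p$ the product $p\alpha$ is nonzero precisely when $p$ is trivial or its rightmost arrow $\alpha_1$ satisfies $\alpha_1\alpha\notin\mathbf{F}$; the resulting right $B$-module decomposition, the left $A$-module decomposition, and the comparison of presentations \eqref{equ:B} and \eqref{equ:A} are all carried out in the same way.
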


\begin{proof}
For each $\alpha\in Q_1$, we denote by $X_\alpha$ the subset of $X$ formed by elements of the form $(p, \alpha)$. We observe that each $kX_\alpha$ is an $A$-submodule of $M$ and that there is an isomorphism of $A$-modules
\begin{align}\label{iso:X}
kX_\alpha\stackrel{\sim}\longrightarrow Ae_{t(\alpha)},\quad (p, \alpha)\mapsto p.
\end{align}
 The disjoint union $X=\bigcup_{\alpha\in Q_1} X_\alpha$ yields a decomposition
\begin{align}\label{decom:M}
M=\bigoplus_{\alpha\in Q_1} kX_\alpha
\end{align}
 of $A$-modules. This proves that  $M$ is a  projective left $A$-module.

Set $D=\{(p, \alpha)\in X\; |\; p\alpha \mbox{ is a nonzero path in } A\}$. For each element $(p, \alpha)\in D$, we set
$$_{(p, \alpha)}X=\{(p, \alpha), (p\alpha, \beta)\; |\; \alpha\beta\in \mathbf{F}\}\subseteq X.$$
Similarly, each $k{_{(p, \alpha)}X}$ is a $B$-submodule of $M$ and there is an isomorphism of right $B$-modules
$$k(_{(p, \alpha)}X)\stackrel{\sim}\longrightarrow e_\alpha B, \quad (p, \alpha)\mapsto e_\alpha, (p\alpha, \beta)\mapsto [\alpha\beta].$$
The disjoint union $X=\bigcup_{(p, \alpha)\in D}\;  {_{(p, \alpha)}X}$ yields a decomposition
\begin{align}\label{decom:M2}
M=\bigoplus_{(p, \alpha)\in D} k({_{(p, \alpha)}X})
\end{align}
of right $B$-modules. It follows that $M$ is a projective   right $B$-module.

We observe that $M.e_\alpha=kX_\alpha$. Therefore, we have the following composition of isomorphisms
$$\xi_\alpha\colon M\otimes_B Be_\alpha \stackrel{\sim}\longrightarrow  M.e_\alpha=kX_\alpha \stackrel{\sim}\longrightarrow  Ae_{t(\alpha)},$$
which sends $(p, \alpha)\otimes e_\alpha$ to $p$. Moreover, for $\beta\alpha\in \mathbf{F}$, we have a commutative diagram
\[\xymatrix{
M\otimes_B Be_\beta \ar[d]_-{M\otimes_B{[\beta\alpha]}}  \ar[rr]^-{\xi_\beta} && Ae_{t(\beta)} \ar[d]^-{\beta}\\
M\otimes_B Be_\alpha  \ar[rr]^-{\xi_\alpha} && Ae_{t(\alpha)}
}\]

Applying $M\otimes_B-$ to (\ref{equ:B}) and using the above commutative diagram, we observe that the resulted exact sequence is identified with (\ref{equ:A}). Then we infer the required isomorphism $M\otimes_B S_\alpha\simeq A\alpha$.
\end{proof}

Since the $A$-$B$-bimodule $M$ is projective on each side, the following triangle functors
$$M\otimes_B-\colon \mathbf{D}_{\rm sg}(B)\longrightarrow \mathbf{D}_{\rm sg}(A) \mbox{ and } {\rm Hom}_A(M, -)\colon \mathbf{K}_{\rm ac}(A\mbox{-{\rm Inj}})\longrightarrow \mathbf{K}_{\rm ac}(B\mbox{-{\rm Inj}}) $$
are well defined.

\begin{thm}\label{thm:main}
Let $A=kQ/I$ be a quadratic monomial algebra  and $B=k\mathcal{R}/J^2$ be the associated algebra with radical square zero. Consider the above $A$-$B$-bimodule $M$. Then the following statements hold.
\begin{enumerate}
\item The above functors $M\otimes_B-$ and ${\rm Hom}_A(M, -)$ are triangle equivalences.
\item Assume that $A$ is Gorenstein. Then there is a $B$-$A$-bimodule $N$ such that $(M, N)$ defines a singular equivalence with  level.
\end{enumerate}
\end{thm}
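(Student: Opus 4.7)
The plan is to derive part (1) directly from Lemma~\ref{lem:M} combined with the uniqueness part of Proposition~\ref{prop:Chen18}, and then to obtain part (2) as an application of Corollary~\ref{cor:Gor} after establishing that $B$ is Gorenstein whenever $A$ is.

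For part (1), Lemma~\ref{lem:M} supplies both the two-sided projectivity of $M$ and, for every $\alpha\in Q_1$, the isomorphism $M\otimes_B S_\alpha \simeq A\alpha$ of left $A$-modules. Hence the triangle functor
$$M\otimes_B-\colon \mathbf{D}_{\rm sg}(B)\longrightarrow \mathbf{D}_{\rm sg}(A)$$
is well defined and agrees on each $S_\alpha$ with the equivalence $F$ of Proposition~\ref{prop:Chen18}. The uniqueness part of that proposition identifies $M\otimes_B-$ with $F$, so it is a triangle equivalence. Lemma~\ref{lem:fd} then transfers this to the statement that ${\rm Hom}_A(M,-)\colon \mathbf{K}_{\rm ac}(A\mbox{-{\rm Inj}})\to \mathbf{K}_{\rm ac}(B\mbox{-{\rm Inj}})$ is a triangle equivalence as well.

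For part (2), the strategy is to invoke Corollary~\ref{cor:Gor} applied to the data $(A,B,M)$. Its three hypotheses split naturally. Separability of the radical quotients is automatic, since $A/{\rm rad}(A)\simeq k^{|Q_0|}$ and $B/{\rm rad}(B)\simeq k^{|Q_1|}$ are finite products of copies of $k$. The singular equivalence induced by $M\otimes_B-$ is exactly the content of part (1). Gorensteinness of $A$ is the standing hypothesis. Once Gorensteinness of $B$ is also verified, the corollary produces an integer $n\geq 0$ and a $B$-$A$-bimodule
$$N \;:=\; \Omega^{n}_{B\mbox{-}A}\bigl({\rm Hom}_A(M, A)\bigr)$$
for which $(M,N)$ defines a singular equivalence with level $n$, as required.

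The one nontrivial step is therefore the implication ``$A$ Gorenstein $\Rightarrow$ $B$ Gorenstein''. The plan here is to use the combinatorial characterizations of Gorensteinness in both classes of algebras. On the one hand, Chen's criterion for a quadratic monomial algebra $A=kQ/I$ states that $A$ is Gorenstein if and only if each arrow $\alpha\in Q_1$ is either uninvolved in $\mathbf{F}$, or else satisfies both $\beta\alpha\in \mathbf{F}$ for some $\beta$ and $\alpha\gamma\in \mathbf{F}$ for some $\gamma$. On the other hand, by the defining properties of the relation quiver $\mathcal{R}$, the two disjuncts translate, respectively, to ``$\alpha$ is an isolated vertex of $\mathcal{R}$'' and ``$\alpha$ has both an incoming and an outgoing arrow in $\mathcal{R}$'', which is exactly the standard criterion for the radical-square-zero algebra $B=k\mathcal{R}/J^2$ to be Gorenstein. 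Matching these two characterizations is the main obstacle, but once it is in place the proof of (2) is immediate from Corollary~\ref{cor:Gor}.
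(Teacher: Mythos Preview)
Your argument for part~(1) is correct and matches the paper's proof exactly: Lemma~\ref{lem:M} plus the uniqueness in Proposition~\ref{prop:Chen18}, then Lemma~\ref{lem:fd}. For part~(2), your overall strategy---verify that $B$ is Gorenstein and then invoke Corollary~\ref{cor:Gor}---is also precisely what the paper does.

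The gap is in your justification of ``$A$ Gorenstein $\Rightarrow$ $B$ Gorenstein''. The two combinatorial criteria you state are both incorrect. The condition ``each arrow is either uninvolved in $\mathbf{F}$ or appears as both a left and a right factor of some relation'' is \emph{not} equivalent to $A$ being Gorenstein: for instance, if $\mathcal{R}$ has a nontrivial acyclic connected component, then $A$ is Gorenstein but the sinks and sources of that component violate your condition; conversely, a triangle cycle in $\mathcal{R}$ with an extra chord satisfies your condition at every vertex yet fails Gorensteinness. Likewise, ``every vertex of $\mathcal{R}$ is isolated or has both an incoming and an outgoing arrow'' is not the criterion for $k\mathcal{R}/J^2$ to be Gorenstein. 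The correct statement, which the paper cites from \cite[Proposition~5.5(1)]{CSZ} (see also \cite[Remark~4.7]{Chen18}), is that $A$ is Gorenstein if and only if every connected component of $\mathcal{R}$ is either a basic cycle or acyclic. From this structural description, $B=k\mathcal{R}/J^2$ decomposes as a product of selfinjective Nakayama algebras (from the basic-cycle components) and algebras of finite global dimension (from the acyclic components), and is therefore Gorenstein. Replacing your two claimed criteria with this single structural one closes the gap.
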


\begin{proof}
By Lemma~\ref{lem:M}, the functor $M\otimes_B-\colon \mathbf{D}_{\rm sg}(B)\rightarrow \mathbf{D}_{\rm sg}(A)$ sends $S_\alpha$ to $A\alpha$. By the uniqueness statement of Proposition~\ref{prop:Chen18}, we infer that $M\otimes_B-$ is a triangle equivalence. In view of Lemma~\ref{lem:fd}, we are done with (1).

For (2), we observe that $B$ is also Gorenstein; see \cite[Remark~4.7]{Chen18}. Indeed, by \cite[Propostion~5.5(1)]{CSZ}, the relation quiver $\mathcal{R}$ consists of basic cycles and acyclic components. Then the algebra $B$ is a direct product of selfinjective algebras and algebras with finite global dimension, and thus is Gorenstein. Clearly, both $A/{{\rm rad}(A)}$ and $B/{{\rm rad}(B)}$ are separable over $k$. Then (2) follows immediately from Corollary~\ref{cor:Gor}.
\end{proof}

\begin{rem}
Since a gentle algebra is  Gorenstein quadratic monomial \cite{GR}, we infer from (2) that there is a singular equivalence with level between a gentle algebra and its associated algebra of radical square zero. We will go beyond the Gorenstein cases in Proposition~\ref{prop:main} below.
\end{rem}

\section{The $A$-dual bimodule}

Let $M$ be the $A$-$B$-bimodule defined in the previous section. We will study the $A$-dual bimodule ${\rm Hom}_A(M, A)$. In Proposition~\ref{prop:main}, a combinatorial condition is given on when  ${\rm Hom}_A(M, A)$ has finite projective dimension as a left $B$-module.

Let us first define an explicit $B$-$A$-bimodule. Consider the following set
$$Y=\{(\alpha|q)\; |\; \alpha\in Q_1, q \mbox{ is a nonzero path in } A \mbox{ satisfying } t(q)=t(\alpha)\}.$$
Here, the path $q$ is allowed to be trivial, that is, $(\alpha|e_{t(\alpha)})$ lies in $Y$. Set $kY$ to be the  $k$-vector space with a basis $Y$.

The right $A$-action on $kY$ is naturally given by the concatenation of paths on $q$. More precisely, for a nonzero path $p$ in $A$, we have
\begin{equation*}
(\alpha|q).p=\begin{cases} (\alpha|qp) & \mbox{ if } t(p)=s(q) \mbox{ and } qp \mbox{ is a  nonzero path in } A;\\
0 & \mbox{ otherwise.}\end{cases}
\end{equation*}
The left $B$-action on $kY$ is given such that $e_\beta.(\alpha|q)=\delta_{\beta, \alpha} (\alpha|q)$. For each $\beta\alpha\in \mathbf{F}$, we define
\begin{equation*}
[\beta\alpha].(\alpha|q)=\begin{cases} (\beta|\beta q) & \mbox{ if } \beta q \mbox{ is a nonzero path in } A;\\
0 & \mbox{ otherwise.}\end{cases}
\end{equation*}
This action might be visualized as follows.
\[\xymatrix@C=5pt{
&&&&& &&\ar@{~>}[dllll]_-q \\
& && \ar[lll]_{\beta} &  \ar@/^/@{.}[ll] &&\ar[lll]_{\alpha}
}\]
We observe that for another $\gamma\beta\in \mathbf{F}$, we necessarily have
$$[\gamma\beta].([\beta\alpha].(\alpha|q))=0.$$
It implies that the left $B$-action on $kY$ is well defined. Moreover, $kY$ is an $A$-$B$-bimodule.

\begin{lem}\label{lem:dual-A}
There is an isomorphism  $kY\simeq {\rm Hom}_A(M, A)$ of $B$-$A$-bimodules. In particular, $kY$ is a projective right $A$-module.
\end{lem}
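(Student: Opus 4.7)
The plan is to write down an explicit $k$-linear map $\Phi\colon kY\to {\rm Hom}_A(M,A)$, verify it matches $k$-bases on both sides, and then check compatibility with the left $B$- and right $A$-actions; projectivity on the right will then drop out for free.

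The starting point is the decomposition (\ref{decom:M}), $M=\bigoplus_{\alpha\in Q_1} kX_\alpha$, together with the left $A$-module isomorphism (\ref{iso:X}), $kX_\alpha\xrightarrow{\sim} Ae_{t(\alpha)}$, $(p,\alpha)\mapsto p$. Via the standard adjunction $ {\rm Hom}_A(Ae_{t(\alpha)},A)\simeq e_{t(\alpha)}A$, an element $q\in e_{t(\alpha)}A$ (that is, a nonzero path $q$ with $t(q)=t(\alpha)$) corresponds to the $A$-module map $(p,\alpha)\mapsto pq$. I therefore define
$$\Phi(\alpha|q)\colon M\longrightarrow A,\qquad (p,\beta)\longmapsto \delta_{\alpha,\beta}\,pq.$$
By construction $\{\Phi(\alpha|q)\}_{(\alpha|q)\in Y}$ is a $k$-basis of ${\rm Hom}_A(M,A)$, so $\Phi$ is already a $k$-linear isomorphism, and the only real content is bilinearity.

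For the right $A$-action, a one-line check gives $(\Phi(\alpha|q)\cdot p)(p',\beta)=\delta_{\alpha,\beta}\,p'qp=\Phi(\alpha|qp)(p',\beta)$ when $qp$ is nonzero in $A$, and both sides vanish otherwise; this matches the defining formula for $(\alpha|q).p$. For the left $B$-action, the $B$-module structure on ${\rm Hom}_A(M,A)$ is transported from the right $B$-action on $M$ via $(b\cdot f)(m)=f(m.b)$. Compatibility with the idempotents $e_\beta$ is immediate from $(p',\gamma).e_\beta=\delta_{\beta,\gamma}(p',\gamma)$. For a relation arrow $[\beta\alpha]$ with $\beta\alpha\in\mathbf{F}$, I compute $(p',\gamma).[\beta\alpha]=\delta_{\gamma,\beta}(p'\beta,\alpha)$ if $p'\beta$ is nonzero in $A$, hence
$$\bigl([\beta\alpha]\cdot \Phi(\alpha|q)\bigr)(p',\gamma)=\Phi(\alpha|q)\bigl((p',\gamma).[\beta\alpha]\bigr)=\delta_{\gamma,\beta}\,p'\beta q,$$
which is exactly $\Phi(\beta|\beta q)(p',\gamma)=\Phi([\beta\alpha]\cdot(\alpha|q))(p',\gamma)$ when $\beta q$ is nonzero, with both sides vanishing otherwise. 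This is the only place where I need to be careful: the two operations---the right action on $M$ and the left action on $kY$---use the same element $[\beta\alpha]\in\mathbf{F}$ but append the arrow $\beta$ on opposite ends (to $p$ on the right, to $q$ on the left), so the correct bookkeeping of which vertex matches which is the one point where it is easy to slip.

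The ``in particular'' statement follows without further work: by Lemma~\ref{lem:M}, $_AM$ is finitely generated projective, so ${\rm Hom}_A(M,A)$ is automatically finitely generated projective as a right $A$-module. Alternatively, one sees this directly from the decomposition $kY=\bigoplus_{\alpha\in Q_1} kY_\alpha$ of right $A$-modules with $kY_\alpha\xrightarrow{\sim} e_{t(\alpha)}A$ via $(\alpha|q)\mapsto q$.
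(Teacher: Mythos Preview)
Your proof is correct and follows essentially the same approach as the paper: the map $\Phi$ is identical to the paper's $\phi$, and both arguments establish that it is a bijection by reducing to the standard isomorphism $e_{t(\alpha)}A\simeq{\rm Hom}_A(Ae_{t(\alpha)},A)$ via the decomposition (\ref{decom:M}). The only difference is that you spell out the verification of the left $B$-action in detail, whereas the paper dismisses it as routine.
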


\begin{proof}
We define a linear map
$$\phi\colon kY\longrightarrow {\rm Hom}_A(M, A)$$
such that $\phi(\alpha|q)\colon M \rightarrow A$ sends $(p|\alpha')$ to $\delta_{\alpha, \alpha'} pq$.

For each $\alpha\in Q_1$, we set $_\alpha Y$ to be the subset of $Y$ formed by elements of the form $(\alpha|q)$. We have a disjoint union $Y=\bigcup_{\alpha\in Q_1} (_\alpha Y)$ and a decomposition $kY=\bigoplus_{\alpha\in Q_1}k(_\alpha Y)$ of right $A$-modules. In view of (\ref{decom:M}), we observe that $\phi$ is a direct sum of its restriction
$$\phi_\alpha \colon k(_\alpha Y) \longrightarrow {\rm Hom}_A(kX_\alpha, A).$$

We observe a natural isomorphism of right $A$-modules
$$k(_\alpha Y)\stackrel{\sim}\longrightarrow  e_{t(\alpha)}A, \quad (\alpha|q)\mapsto q.$$
The isomorphism (\ref{iso:X}) induces an isomorphism
$${\rm Hom}_A(kX_\alpha, A)\simeq {\rm Hom}_A(Ae_{t(\alpha)}, A).$$
Using these isomorphisms, the above restriction $\phi_\alpha$ is identified with the canonical isomorphism
$$e_{t(\alpha)}A\simeq  {\rm Hom}_A(Ae_{t(\alpha)}, A).$$
This proves that $\phi$ is an isomorphism of right $A$-modules. It is routine to verify that $\phi$ respects the left $B$-actions.
\end{proof}

We consider the following subset of $Y$
\begin{equation*}
Y''=\left\{ (\alpha|\alpha q')\in Y\; \left| \; \begin{aligned} &\alpha\in Q_1 \mbox{ such that there is no arrow } \beta \mbox{ with }\alpha\beta\in \mathbf{F}, \\
& \mbox{ both } q' \mbox{ and } \alpha q' \mbox{ are nonzero paths in } A \end{aligned} \right. \right\}.
\end{equation*}
Set $Y'=Y\backslash Y''$ to be its complement. This yields a decomposition $$kY=kY'\oplus kY''$$
as a left $B$-module. Furthermore, we need the following subset
$$Y'_{\rm top}=\{(\alpha|q)\in Y\; |\; \mbox{ the nonzero path } q \mbox{ does not end with } \alpha\}\subseteq Y'.$$

The following terminologies will be useful. In a finite quiver $\Gamma$, the \emph{in-degree} of a vertex $i$ is defined to be ${\rm in}(i)=|\{\alpha\in \Gamma_1\; |\; t(\alpha)=i\}|$. A vertex $i$ is called a \emph{source}, if there is no arrow ending at $i$. A vertex $i$ is said to be \emph{left-bounded}, provided that there is a uniform bound of all the paths starting at $i$.

\begin{lem}\label{B-pd}
Keep the notation as above. Then the following statements hold.
\begin{enumerate}
\item There is an isomorphism $kY''\simeq \bigoplus S_\alpha^{\mu_\alpha}$ of $B$-modules, where $\alpha$ runs over all the sources in $\mathcal{R}$ and each multiplicity  $\mu_\alpha> 0$.
\item There is a short exact sequence of left $B$-modules
$$0\longrightarrow \Omega_B(kY') \stackrel{\rm inc}\longrightarrow \bigoplus_{(\alpha|q)\in Y'_{\rm top}} B e_{(\alpha|q)} \stackrel{\pi} \longrightarrow kY'\longrightarrow 0$$
where $e_{(\alpha|q)}=e_\alpha$ and the restriction of $\pi$ to $Be_{(\alpha|q)}$ sends $e_{\alpha|q}$ to $(\alpha|q)\in Y'$. Moreover, $\pi$ is a projective cover of $kY'$ and $\Omega_B(kY')\simeq \bigoplus S_\beta^{\nu_\beta}$, where $\beta$ runs over all the vertices in $\mathcal{R}$ satisfying ${\rm in}(\beta)\geq 2$ and each multiplicity $\nu_\beta>0$.
\end{enumerate}
\end{lem}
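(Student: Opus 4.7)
The plan is to dissect $kY$ as a left $B$-module by a direct analysis of its basis, exploiting that $B$ has radical square zero so that the radical action on each basis vector yields at most a single basis vector of $kY$ (or zero). For part (1), I would first check that if $(\alpha|\alpha q')\in Y''$ and $\beta\alpha\in\mathbf{F}$, then $[\beta\alpha].(\alpha|\alpha q') = (\beta|\beta\alpha q')$ vanishes, since $\beta\alpha q'$ contains the forbidden subpath $\beta\alpha$ and is therefore zero in $A$. Together with $e_\gamma.(\alpha|\alpha q') = \delta_{\gamma,\alpha}(\alpha|\alpha q')$, this shows that each basis vector in $Y''$ spans a copy of $S_\alpha$. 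The vertex $\alpha$ is a source in $\mathcal{R}$ precisely by the condition defining $Y''$, and the choice $q' = e_{s(\alpha)}$ produces $(\alpha|\alpha)\in Y''$, witnessing $\mu_\alpha > 0$.

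For part (2), I would first verify surjectivity of $\pi$. Elements of $Y'_{\rm top}$ are hit by the tops $e_{(\alpha|q)}$. For $(\gamma|\gamma r')\in Y'\setminus Y'_{\rm top}$ (so $\gamma$ is not a source in $\mathcal{R}$), I pick any $\alpha$ with $\gamma\alpha\in\mathbf{F}$; then $(\alpha|r')\in Y'_{\rm top}$, because if $r'$ ended with $\alpha$ (i.e.\ $r'=\alpha r''$), the path $\gamma r'=\gamma\alpha r''$ would vanish in $A$, contradicting $(\gamma|\gamma r')\in Y$. Hence $[\gamma\alpha]e_{(\alpha|r')}\mapsto(\gamma|\gamma r')$. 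For the projective cover property, note that $\pi({\rm rad}(P))$ is spanned by elements of the form $(\beta|\beta q)\in Y'\setminus Y'_{\rm top}$, while the tops $e_{(\alpha|q)}$ map to distinct basis vectors $(\alpha|q)\in Y'_{\rm top}$; this forces $\ker(\pi)\subseteq{\rm rad}(P)$. Since $B$ has radical square zero, ${\rm rad}(P)$ is a semisimple $B$-module, so $\Omega_B(kY')=\ker(\pi)$ is semisimple and of the form $\bigoplus S_\beta^{\nu_\beta}$.

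The main technical step is computing the multiplicities $\nu_\beta$. Fixing $\beta$, I analyze $\pi|_{e_\beta P}$: its top component maps isomorphically onto $e_\beta kY'_{\rm top}$, so $\ker(\pi)\cap e_\beta P$ sits inside the radical component, spanned by $\{[\beta\alpha]e_{(\alpha|q)}:\beta\alpha\in\mathbf{F},\,(\alpha|q)\in Y'_{\rm top}\}$. Grouping by target, each $(\beta|\beta r')\in Y'\setminus Y'_{\rm top}$ has preimage of size ${\rm in}_{\mathcal{R}}(\beta)$ — one generator for each $\alpha$ with $\beta\alpha\in\mathbf{F}$, all lying in $Y'_{\rm top}$ by the same non-cancellation argument — contributing ${\rm in}_{\mathcal{R}}(\beta)-1$ copies of $S_\beta$ to the kernel. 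Separately, generators $[\beta\alpha]e_{(\alpha|q)}$ with $\beta q=0$ lie directly in the kernel; this forces $q=\delta q''$ with $\beta\delta\in\mathbf{F}$ and $\alpha\neq\delta$, yielding ${\rm in}_{\mathcal{R}}(\beta)-1$ contributions for each such $q$. Both sources are proportional to ${\rm in}_{\mathcal{R}}(\beta)-1$, and the basic witness $(\beta|\beta)\in Y'\setminus Y'_{\rm top}$ exists whenever ${\rm in}_{\mathcal{R}}(\beta)\geq 1$, so $\nu_\beta>0$ precisely when ${\rm in}_{\mathcal{R}}(\beta)\geq 2$. The main obstacle I anticipate is organizing this count cleanly — disentangling the ``injective fiber'' contributions from the ``zero fiber'' contributions without double-counting — and confirming the asserted dichotomy in all cases.
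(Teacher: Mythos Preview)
Your proposal is correct and follows essentially the same route as the paper. Both arguments identify $Y'_{\rm top}$ with the top of $kY'$, build $\pi$ as the natural projective cover, and then describe the kernel inside ${\rm rad}(P)$ by exploiting that $B$ is radical square zero; the paper simply writes down the two spanning families $\{[\beta\alpha]e_{(\alpha|q)}-[\beta\alpha']e_{(\alpha'|q)}\}$ and $\{[\beta\alpha]e_{(\alpha|q)}:\beta q=0\}$ and reads off ${\rm in}(\beta)\geq 2$ from each, whereas you reorganize the same information as a fiber count over the targets $(\beta|\beta r')$ together with the zero fiber, arriving at the uniform contribution ${\rm in}_{\mathcal R}(\beta)-1$ per $q$. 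The positivity witness is also identical (the paper uses $[\beta\alpha]e_{(\alpha|\alpha')}$ with $\beta\alpha,\beta\alpha'\in\mathbf{F}$, which is a special case of your witness via $(\beta|\beta)$).
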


\begin{proof}
(1) For each $(\alpha|\alpha q')\in Y''$, the corresponding vertex $\alpha$ in $\mathcal{R}$ is a source. We observe that in the canonical decomposition $kY''=\bigoplus_{(\alpha|\alpha q')\in Y''} k(\alpha|\alpha q')$,  each direct summand  $k(\alpha|\alpha q')$ is isomorphic to $S_\alpha$. To see that each $\mu_\alpha$ is positive, we just note that the element $(\alpha|\alpha)$ does belong to  $Y''$.

(2) We observe that ${\rm rad}(B).(kY')=k(Y'\backslash Y'_{\rm top})$. In other words, the subspace $kY'_{\rm top}$ might be identified with the top of $kY'$, that is, ${\rm top}(kY')=kY'/{{\rm rad}(B).(kY')}$.  It is well known that the projective cover of $kY'$ is isomorphic to the projective cover of its top. Then we infer that $\pi$ is the desired projective cover.

Since $B$ is radical square zero, we know that  $\Omega_B(kY')$ is semisimple. We observe that the following two subsets of $\bigoplus_{(\alpha|q)\in Y'_{\rm top}} Be_{(\alpha|q)}$:
\begin{align*}
&\{ [\beta\alpha]e_{(\alpha|q)}-[\beta\alpha']e_{(\alpha'|q)} \; |\; \alpha\neq \alpha', \; (\alpha|q), (\alpha'|q)\in Y'_{\rm top}, \; \beta q\mbox{ is a nonzero path in } A  \}\\
& \mbox{ and } \; \{[\beta\alpha]e_{(\alpha|q)}\; |\; (\alpha|q)\in Y'_{\rm top}, \; \beta q=0 \mbox{ in } A\}
\end{align*}
span the kernel of $\pi$. In each subset, the corresponding vertex of $\beta$ in $\mathcal{R}$ satisfies $${\rm in}(\beta)=|\{\alpha\in Q_1\; |\; \beta\alpha\in \mathbf{F}\}|\geq 2.$$
The subspace spanned by the corresponding element $[\beta\alpha]e_{(\alpha|q)}-[\beta\alpha']e_{(\alpha'|q)}$ or $[\beta\alpha]e_{(\alpha|q)}$ is a $B$-submodule, and  is isomorphic to $S_\beta$. Then we obtain the desired decomposition of $\Omega_B(kY')$ into direct sums of $S_\beta$.

Finally, to see that each $\nu_\beta$ is positive, we assume that both $\beta\alpha$ and $\beta\alpha'$ lie in $\mathbf{F}$. Then the element $(\alpha|\alpha')$ lies in $Y'_{\rm top}$ and $[\beta\alpha]e_{(\alpha|\alpha')}$ lies in $\Omega_B(kY')$. The latter element spans a $B$-submodule of $\Omega_B(kY')$, which is isomorphic to $S_\beta$.
\end{proof}

\begin{prop}\label{prop:main}
Keep the notation as above. Then ${\rm Hom}_A(M, A)$, as a left $B$-module, has finite projective dimension if and only if any vertex in $\mathcal{R}$ is left-bounded provided that it is a source or has in-degree at least two.

In this case, there is a $B$-$A$-bimodule $N$ such that $(M, N)$ defines a singular equivalence with a certain level.
\end{prop}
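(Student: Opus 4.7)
The plan is to combine Lemma~\ref{lem:dual-A} with Lemma~\ref{B-pd} to translate the finite projective dimension of ${\rm Hom}_A(M, A)$ over $B$ into a combinatorial condition on simple $B$-modules, and then to invoke Proposition~\ref{prop:sing} to conclude the existence of $N$.

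First, by Lemma~\ref{lem:dual-A} I identify ${\rm Hom}_A(M, A)$ with the left $B$-module $kY$, and the direct sum decomposition $kY = kY' \oplus kY''$ shows that ${\rm pd}_B({\rm Hom}_A(M, A))$ is finite if and only if both ${\rm pd}_B(kY')$ and ${\rm pd}_B(kY'')$ are finite. By Lemma~\ref{B-pd}(1), $kY''$ is a finite direct sum of copies of $S_\alpha$ with strictly positive multiplicity for every source $\alpha$ of $\mathcal{R}$; hence finiteness of ${\rm pd}_B(kY'')$ is equivalent to finiteness of ${\rm pd}_B(S_\alpha)$ for every source $\alpha$. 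The short exact sequence in Lemma~\ref{B-pd}(2), whose middle term is projective, implies that ${\rm pd}_B(kY')$ is finite if and only if ${\rm pd}_B(\Omega_B(kY'))$ is; and the latter, being a finite direct sum of $S_\beta$ with strictly positive multiplicity for every vertex $\beta$ of $\mathcal{R}$ with ${\rm in}(\beta) \geq 2$, has finite projective dimension if and only if ${\rm pd}_B(S_\beta)$ is finite for each such $\beta$.

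Next, I would compute ${\rm pd}_B(S_\gamma)$ in terms of $\mathcal{R}$ for each vertex $\gamma$. Since $B = k\mathcal{R}/J^2$ has radical square zero, $\Omega_B(S_\gamma)$ is the radical of the projective cover $Be_\gamma$, and a direct inspection shows $\Omega_B(S_\gamma) \simeq \bigoplus_{\delta\,:\, \delta\gamma \in \mathbf{F}} S_\delta$, the direct sum indexed by arrows of $\mathcal{R}$ starting at $\gamma$. Iterating, $\Omega_B^n(S_\gamma)$ is the direct sum of $S_\delta$ indexed by paths of length $n$ in $\mathcal{R}$ starting at $\gamma$; it vanishes for some $n$ precisely when the lengths of all such paths admit a uniform bound, that is, when $\gamma$ is left-bounded. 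Combining this with the previous reduction yields the stated equivalence.

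For the final conclusion, I appeal to Proposition~\ref{prop:sing}. The separability hypothesis is automatic since $A/{\rm rad}(A)$ and $B/{\rm rad}(B)$ are products of copies of $k$; the hypothesis that ${\rm Hom}_A(M, A)$ has finite projective dimension as a left $B$-module is exactly the condition just established; and Theorem~\ref{thm:main}(1) provides that $M\otimes_B -$ induces a singular equivalence. The proposition then supplies a $B$-$A$-bimodule $N$ such that $(M, N)$ defines a singular equivalence with level. The only mildly delicate step is the syzygy computation for simples over the radical-square-zero algebra $B$ together with the bookkeeping that every source and every vertex of in-degree at least two actually appears with positive multiplicity in $kY''$ respectively $\Omega_B(kY')$; everything else is a direct assembly of previously established results.
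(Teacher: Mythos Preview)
Your proposal is correct and follows essentially the same approach as the paper: identify ${\rm Hom}_A(M,A)$ with $kY$ via Lemma~\ref{lem:dual-A}, use Lemma~\ref{B-pd} to reduce the question to the finite projective dimension of the relevant simples $S_\alpha$, translate this into left-boundedness via the radical-square-zero structure of $B$, and then apply Proposition~\ref{prop:sing} together with Theorem~\ref{thm:main}(1). The paper's proof is just a terser version of exactly this argument.
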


\begin{proof}
By Lemma~\ref{lem:dual-A} we identify $kY$ with ${\rm Hom}_A(M, A)$. Since $B$ is radical square zero, a simple module $S_\alpha$ has finite projective dimension if and only if the vertex $\alpha$ in $\mathcal{R}$ is left-bounded. Then Lemma~\ref{B-pd} implies the first statement. The last one follows from Proposition~\ref{prop:sing}.
\end{proof}

\begin{rem}
(1) We observe that the Gorenstein cases are included in Proposition~\ref{prop:main}. This might be deduced from Lemma~\ref{lem:Gor} or the description of $\mathcal{R}$ in \cite[Proposition~5.5(1)]{CSZ}.

(2) Assume that $A$ satisfies the condition in Proposition~\ref{prop:main}. By applying  \cite[Theorems~I and II]{CLW}, the above singular equivalence with level proves that Keller's conjecture  holds for $A$; compare Remark~\ref{rem:singequi}(2).

(3) In the above singular equivalence with level defined by $(M, N)$, we do not have a concrete description of the $B$-$A$-bimodule $N$; consult the proof of Proposition~\ref{prop:sing}.
\end{rem}

\section{The $B$-dual bimodule}

Let $M$ be the $A$-$B$-bimodule defined in Section~\ref{sec:M}. In this final section, we describe the $B$-dual bimodule ${\rm Hom}_{B^{\rm op}}(M, B)$.

Recall from the proof of Lemma~\ref{lem:M} the  set
$$D=\{(p, \alpha)\in X\; |\; p\alpha \mbox{ is a nonzero path in } A\}.$$
We introduce a new set as follows
$$Z=\{(e_\alpha|p,\alpha), ([\beta\alpha]|p,\alpha)\;|\; (p, \alpha)\in D,\mbox{ and }\beta\alpha\in\mathbf{F}\}.$$
We will define a $B$-$A$-bimodule structure on $kZ$. The left $B$-action is given by the left multiplication on the leftmost entries of the triples in $Z$. For example, we have
$$[\beta\alpha] (e_\alpha|p, \alpha)=([\beta\alpha]| p, \alpha).$$
We observe that $kZ$ is a projective $B$-module.

The right $A$-action on $kZ$ is defined as follows. For each nonzero path $q$ in $A$, we set
\begin{equation}
(e_\alpha|p,\alpha).q=\begin{cases}
(e_\alpha|\gamma,\alpha)& \mbox{ if } p=q\gamma \mbox{ for a nonzero path } \gamma;\\
\sum_{\{\beta\in Q_1\; |\; \alpha\beta\in \mathbf{F}\}}\; ([\alpha\beta]|e_{t(\beta)},\beta) & \mbox{ if } q=p\alpha;\\
0 & \mbox{ otherwise}.
\end{cases}
\end{equation}
Furthermore, we set
\begin{equation}
([\beta\alpha]|p,\alpha).q=\begin{cases}
([\beta\alpha]|\gamma,\alpha)& \mbox{ if } p=q\gamma \mbox{ for a nonzero path } \gamma;\\
0 & \mbox{ otherwise}.
\end{cases}
\end{equation}

This lemma is analogous to Lemma~\ref{lem:dual-A}.

\begin{lem}\label{lem:dual-B}
There is an isomorphism  $kZ\simeq {\rm Hom}_{B^{\rm op}}(M, B)$ of $B$-$A$-bimodules.
\end{lem}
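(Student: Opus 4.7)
The plan is to construct an explicit $k$-linear map $\psi\colon kZ\to{\rm Hom}_{B^{\rm op}}(M,B)$ mirroring the construction of $\phi$ in Lemma~\ref{lem:dual-A}, and then verify that it is a bijection compatible with both the left $B$- and the right $A$-actions. Using the right $B$-module decomposition $M=\bigoplus_{(p,\alpha)\in D} k({_{(p,\alpha)}X})$ from the proof of Lemma~\ref{lem:M}, I send $(e_\alpha|p,\alpha)$ to the unique right $B$-morphism $M\to B$ whose restriction to $k({_{(p,\alpha)}X})$ sends $(p,\alpha)\mapsto e_\alpha$ (so that right $B$-linearity forces $(p\alpha,\beta)\mapsto[\alpha\beta]$ for each $\alpha\beta\in\mathbf{F}$) and which vanishes on every other summand. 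Similarly, $([\beta\alpha]|p,\alpha)$ is sent to the morphism sending $(p,\alpha)\mapsto[\beta\alpha]$ and vanishing elsewhere, the value on $(p\alpha,\gamma)$ being automatically $[\beta\alpha]\cdot[\alpha\gamma]=0$.

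Bijectivity of $\psi$ is then a bookkeeping argument: the canonical identifications
\[
{\rm Hom}_{B^{\rm op}}(k({_{(p,\alpha)}X}),B)\simeq {\rm Hom}_{B^{\rm op}}(e_\alpha B,B)\simeq Be_\alpha
\]
from Lemma~\ref{lem:M} show that under $\psi$ the basis $\{e_\alpha\}\cup\{[\beta\alpha]\mid\beta\alpha\in\mathbf{F}\}$ of $Be_\alpha$ corresponds exactly to the subset of $Z$ indexed by $(p,\alpha)\in D$, and since $D$ is finite the product of the summands is a direct sum whose basis matches $Z$. Compatibility with the left $B$-action is then built into the construction, since left multiplication by $b\in B$ on a right $B$-morphism acts through the target.

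The crux is compatibility with the right $A$-action, defined on the Hom side by $(f.q)(m)=f(q.m)$. For $f=\psi(e_\alpha|p,\alpha)$ and a nonzero path $q$ in $A$, the value $(f.q)(r,\alpha')$ is nonzero precisely when $(qr,\alpha')\in{_{(p,\alpha)}X}$, i.e.\ either $qr=p$ with $\alpha'=\alpha$, or $qr=p\alpha$ with $\alpha'=\beta$ for some $\alpha\beta\in\mathbf{F}$. The first alternative, namely $p=q\gamma$ with $\gamma=r$, reproduces $\psi(e_\alpha|\gamma,\alpha)$; the values of this map on the extended elements $(\gamma\alpha,\beta)\in{_{(\gamma,\alpha)}X}$ automatically absorb those instances of the second alternative in which $r$ ends in $\alpha$. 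The only genuinely new contribution is $q=p\alpha$ with $r=e_{t(\beta)}$ trivial, which yields exactly $\sum_{\alpha\beta\in\mathbf{F}}\psi([\alpha\beta]|e_{t(\beta)},\beta)$, matching the second case of the definition. The analogous verification for the generators $([\beta\alpha]|p,\alpha)$ is simpler, since such a morphism already vanishes on $(p\alpha,\gamma)$.

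The main obstacle will be the bookkeeping required to ensure that the two cases of the right $A$-action on $kZ$ account for all nonzero contributions without overlap. The key observation is that any factorization $qr=p\alpha$ with $r$ non-trivial forces $r$ to end in $\alpha$ by uniqueness of path decomposition, so that $r=\gamma\alpha$ for some (possibly trivial) $\gamma$ with $q\gamma=p$, and this contribution is already subsumed by the first case through the extension of $\psi(e_\alpha|\gamma,\alpha)$ to the full summand ${_{(\gamma,\alpha)}X}$. Once this observation is in place, the ``otherwise'' vanishing follows immediately, and the lemma is complete.
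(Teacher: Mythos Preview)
Your proposal is correct and follows essentially the same approach as the paper: you define the same map $\psi$ (sending $(x\mid p,\alpha)$ to the unique right $B$-morphism determined by $(p,\alpha)\mapsto x$), use the decomposition (\ref{decom:M2}) together with the identification ${\rm Hom}_{B^{\rm op}}(e_\alpha B,B)\simeq Be_\alpha$ to establish that $\psi$ is an isomorphism of left $B$-modules, and then verify compatibility with the right $A$-action. The paper omits this last verification entirely as routine, whereas you carry it out explicitly; your prefix analysis showing that $qr=p\alpha$ with $r$ nontrivial forces $r=\gamma\alpha$ and $p=q\gamma$ is exactly the bookkeeping needed, and it correctly shows that the three cases in the definition of the right $A$-action on $kZ$ exhaust the nonzero contributions without overlap.
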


\begin{proof}
We define a $k$-linear map
$$\psi\colon kZ \longrightarrow {\rm Hom}_{B^{\rm op}}(M, B)$$
such that $\psi(x|p, \alpha)\colon M\rightarrow B$ is the unique right $B$-module morphism sending $(p', \alpha')\in D$ to $\delta_{p, p'}\delta_{\alpha, \alpha'} x $ for $x\in\{e_\alpha, [\beta\alpha]\; |\;\beta\in Q_1,  \beta\alpha\in \mathbf{F}\}$. In view of the decomposition (\ref{decom:M2}), it is not hard to prove that $\psi$ is an isomorphism of left $B$-modules. We omit the verification that it respects the right $A$-actions.
\end{proof}

For each $\alpha\in Q_1$, we consider the following subset of $Z$
$${_\alpha}Z=\{(e_\alpha|p,\alpha) \mid  (p, \alpha)\in D\} \cup \{([\alpha\beta]|q,\beta)\mid \alpha\beta\in \mathbf{F}, \; (q, \beta)\in D\}.$$
We have a disjoint union $Z=\bigcup_{\alpha\in Q_1}{_\alpha Z}$, which yields a decomposition of right $A$-modules
\begin{equation*}
kZ= \bigoplus_{\alpha\in Q_1} k({_\alpha Z}).
\end{equation*}
Lemma~\ref{lem:dual-B} and this decomposition might be useful to study the problem when ${\rm Hom}_{B^{\rm op}}(M, B)$ has finite projective dimension as a right $A$-module.

The following example shows that this case is quite different from the $A$-dual bimodule.

\begin{exm}
	Let $A$ be given by the following quiver $Q$:
		\[\xymatrix{
		1\ar@/^0.6pc/[r]^{\alpha}&2\ar@/^0.6pc/[l]^{\beta}\ar@(ur,dr)^{\gamma}& }\]
	with relations  given by $\mathbf{F}=\{\alpha\beta,\beta\gamma,\gamma\gamma\}$. The relation quiver  $\mathcal{R}$ is
	\[\xymatrix{ \gamma\ar@(ul,dl)_{[\gamma\gamma]}\ar[r]^{[\beta\gamma]}&\beta\ar[r]^{[\alpha\beta]}& \alpha}\]
	We observe that $A$ is non-Gorenstein by \cite[Proposition~5.5(1)]{CSZ}.

In $\mathcal{R}$, there is neither sources nor vertices with in-degree at least two. It follows that the $A$-dual bimodule ${\rm Hom}_A(M,A)$ is projective as a left $B$-module by Lemma~\ref{B-pd}.

	We observe that
$${_\gamma Z}=\{(e_{\gamma}|e_2,\gamma), ([\gamma\gamma]|e_2,\gamma)\}.$$
  The projective cover of $k({_\gamma Z})$ is $e_2A$, and its syzygy is isomorphic to a direct sum of two copies of $S_1$. It follows that as a right $A$-module, $k({_\gamma Z})$ has infinite projective dimension. We conclude that  as a right $A$-module, ${\rm Hom}_{B^{\rm op}}(M, B)$ has infinite projective dimension.
\end{exm}

In view of the following known cases, the non-Gorensteinness in the previous example is essential.

\begin{lem}
Assume that $A$ is Gorenstein. Then ${\rm Hom}_{B^{\rm op}}(M, B)$ has finite projective dimension as a right $A$-module.
\end{lem}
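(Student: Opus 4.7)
The plan is to reduce the statement directly to Lemma~\ref{lem:Gor}, which asserts that when both $A$ and $B$ are Gorenstein and the $A$-$B$-bimodule $M$ is finitely generated projective on each side, the two dual bimodules ${\rm Hom}_A(M,A)$ and ${\rm Hom}_{B^{\rm op}}(M,B)$ automatically have finite projective dimension on the appropriate side. The projectivity of $M$ on each side is ensured by Lemma~\ref{lem:M}, so the only hypothesis of Lemma~\ref{lem:Gor} that requires checking is that $B$ is Gorenstein.

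First, I would verify that the Gorensteinness of $A$ forces the Gorensteinness of $B$. This is already recorded inside the proof of Theorem~\ref{thm:main}(2): by \cite[Proposition~5.5(1)]{CSZ}, for a Gorenstein quadratic monomial algebra the relation quiver $\mathcal{R}$ is the disjoint union of basic cycles and acyclic components. Since $B=k\mathcal{R}/J^2$ is radical square zero, this decomposition of $\mathcal{R}$ realizes $B$ as a direct product of selfinjective algebras (from the cyclic components) and algebras of finite global dimension (from the acyclic components); in particular $B$ is Gorenstein.

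With both $A$ and $B$ Gorenstein, the conclusion is the right-module half of Lemma~\ref{lem:Gor}. The argument there is that ${\rm Hom}_{B^{\rm op}}(M,-)\colon \mbox{mod-}B\to \mbox{mod-}A$ is exact (since $M_B$ is projective) and preserves injective right modules; hence it preserves modules of finite injective dimension. Applied to $B_B$, which has finite injective dimension by the Gorensteinness of $B$, this shows ${\rm Hom}_{B^{\rm op}}(M,B)$ has finite injective dimension as a right $A$-module. The Gorensteinness of $A$ then upgrades ``finite injective dimension'' to ``finite projective dimension'', completing the proof.

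There is essentially no obstacle here: the entire homological content has been isolated in Lemma~\ref{lem:Gor}, and the only ingredient specific to the quadratic monomial setup is the implication ``$A$ Gorenstein $\Rightarrow$ $B$ Gorenstein'', which itself follows from the structural description of $\mathcal{R}$ in \cite[Proposition~5.5(1)]{CSZ}. Consequently, the statement can reasonably be written as a one-line corollary citing Lemma~\ref{lem:Gor} together with the observation on $B$ made in the proof of Theorem~\ref{thm:main}(2).
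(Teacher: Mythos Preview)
Your proposal is correct and follows exactly the paper's approach: the paper's proof simply notes that $B$ is Gorenstein (referring to the proof of Theorem~\ref{thm:main}) and then invokes Lemma~\ref{lem:Gor}. Your write-up just unpacks these two citations in more detail.
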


\begin{proof}
As we mention in the proof of Theorem~\ref{thm:main}, the algebra $B$ is also Gorenstein. Then the result is a special case of Lemma~\ref{lem:Gor}.
\end{proof}

However, we do not have nice conditions on when precisely ${\rm Hom}_{B^{\rm op}}(M, B)$ has finite projective dimension as a right $A$-module.

\vskip 10pt

\noindent {\bf Acknowledgements.}\quad The first author thanks Bernhard Keller and Zhengfang Wang for helpful discussion. This work is supported by  National Natural Science Foundation of China (No.s 11671245,11971449, and 11901551).

\bibliography{}

\vskip 10pt

 {\footnotesize \noindent Xiao-Wu Chen, Jian Liu, Ren Wang\\
 Key Laboratory of Wu Wen-Tsun Mathematics, Chinese Academy of Sciences,\\
 School of Mathematical Sciences, University of Science and Technology of China, Hefei 230026, Anhui, PR China\\
E-mail: xwchen@mail.ustc.edu.cn; liuj231@mail.ustc.edu.cn; renw@mail.ustc.edu.cn}

\end{document}